\def\?[#1]{\textbf{[#1]}\marginpar{\Large{\textbf{??}}}}
\newtheorem{theo}{Theorem}
\newtheorem{prop}{Proposition}[section]
\newtheorem{lemm}[prop]{Lemma}
\newtheorem{corr}[prop]{Corollary}
\newtheorem{rem}{Remark}
\numberwithin{equation}{section}
\newcommand{\mc}{\mathcal}
\newcommand{\rr}{\mathbb{R}}
\newcommand{\nn}{\mathbb{N}}
\newcommand{\cc}{\mathbb{C}}
\newcommand{\la}{\lambda}
\newcommand{\eps}{\epsilon}
\newcommand{\pl}{\partial}
\newcommand{\x}{\times}
\newcommand{\til}{\widetilde}
\newcommand{\cjd}{\rangle}
\newcommand{\cjg}{\langle}
\newcommand{\demi}{\tfrac{1}{2}}
\title[Horocyclic invariance of Ruelle resonant states]{Horocyclic invariance of Ruelle resonant states for contact Anosov flows in dimension $3$}
\author{Colin Guillarmou}
\email{colin.guillarmou@math.u-psud.fr}
\address{Laboratoire de Math\'ematiques d'Orsay, Univ. Paris-Sud, CNRS, Universit\'e Paris-Saclay, 91405 Orsay, France}
\author{Fr\'ed\'eric Faure}
\email{frederic.faure@univ-grenoble-alpes.fr}
\address{Institut Fourier, UMR 5582, 100 rue des Maths, BP74 38402 St Martin d?H\`eres.}
\begin{document}
\maketitle

\begin{abstract}
We show that for contact Anosov flows in dimension $3$ the resonant states associated to the first band of Ruelle resonances are distributions that are invariant by the unstable horocyclic flow.  
\end{abstract}

\section{Introduction}
Since the work of Butterley-Liverani \cite{BuLi} and Faure-Sj\"ostrand \cite{FaSj}, one can define an intrinsic discrete spectrum for the vector field $X$ generating a smooth Anosov flow on a compact manifold $\mc{M}$. More precisely, one view $P:=-X$ as a first order differential operator and we can construct appropriate anisotropic Sobolev spaces $\mc{H}^N$ (depending on parameter $N>0$) related to the stable/unstable splitting of the flow, on which the first order differential operator $P-\la$ is an analytic family of Fredholm operators of index $0$ in the complex half-plane $\{{\rm Re}(\la)>C_0-\mu N\}$ for some $C_0\geq 0$  and $\mu>0$ depending on $X$; here $N>0$ can be taken as large as we like. The eigenvalues and the eigenstates of $P$ are independent of $N$, they are called \emph{resonances} and 
\emph{resonant states}.  The operator is not self-adjoint on $\mc{H}^N$ and there can be Jordan blocks. We say that $u\in \mc{H}^N$ is a \emph{generalized resonant state} with resonance $\la_0\in \{{\rm Re}(\la)>C_0-\mu N\}$ if $(P-\la_0)^ju=0$ for some $j\in \nn$. 
An equivalent way to define resonances for $P$ is through the resolvent: the resolvent $R_{P}(\la):=(P-\la)^{-1}$ is an analytic family of bounded operators on $L^2(\mc{M},dm)$ (for some fixed Lebesgue type measure $dm$) in $\{{\rm Re}(\la)>C_0\}$ for some $C_0\geq 0$, there exists a meromorphic continuation of $R_P(\la)$ to $\la \in \cc$ as a map 
\[ R_P(\la): C^{\infty}(\mc{M})\to \mc{D}'(\mc{M})\] 
and the polar part of the Laurent expansion of $R_P(\la)$ at a pole $\la_0$ is a finite rank operator. The resonances are the poles of $R_P(\la)$ and the generalized resonant states are the elements in the range of the residue
\[\Pi_{\la_0}:=-{\rm Res}_{\la_0}R_P(\la)\]
which turns out to be a projector. 

We will now assume that $\mc{M}$ is a closed oriented manifold with dimension $3$ and that $X$ generates a contact Anosov flow, i.e there is a smooth one-form $\alpha$ such that $d\alpha$ is symplectic on $\ker \alpha$, $\alpha(X)=1$ and $i_Xd\alpha=0$. 
We fix a smooth metric $G$ on $\mc{M}$ and we denote by $E_s$ and $E_u$ the stable and unstable bundles, the tangent bundle has a flow-invariant continuous splitting 
\begin{equation}\label{split} T\mc{M}= \rr X\oplus E_s\oplus E_u
\end{equation} 
such that there is $C>1$ and $\eps>0$ such that for all $z\in \mc{M}$, there is 
$\mu_{\max}(z)>\mu_{\min}(z)>\eps$ so that 
\begin{equation} \label{estimeehyp-intro}
\begin{gathered}
\forall \xi\in  E_s(z), \forall t\geq 0, \,\,\,\,
C^{-1}e^{-\mu_{\max}(z)t}|\xi|_G\leq |d\varphi_t(z).\xi|_G\leq Ce^{-\mu_{\min}(z)t}|\xi|_G,\\
\forall \xi\in  E_u(z), \forall t\geq 0, \,\,\,\,
C^{-1}e^{-\mu_{\max}(z)t}|\xi|_G\leq  |d\varphi_{-t}(z).\xi|_G\leq Ce^{-\mu_{\min}(z) t}|\xi|_G.
\end{gathered}\end{equation}
We define the mimimal/maximal expansion rates of the flow
\begin{equation}\label{numin-intro}
\begin{gathered}
 \mu_{\max}:=\lim_{t\to +\infty}\sup_{z\in \mc{M}}-\frac{1}{t}
 \log \Big|d\varphi_t(z)|_{E_s(z)}\Big|_G=\lim_{t\to +\infty}\sup_{z\in \mc{M}}-\frac{1}{t}
 \log \Big|d\varphi_{-t}(z)|_{E_u(z)}\Big|_G, \\
 \mu_{\min}:=\lim_{t\to +\infty}\inf_{z\in \mc{M}}-\frac{1}{t}
 \log \Big|d\varphi_t(z)|_{E_s(z)}\Big|_G=\lim_{t\to +\infty}\inf_{z\in \mc{M}}-\frac{1}{t}
 \log \Big|d\varphi_{-t}(z)|_{E_u(z)}\Big|_G.
\end{gathered}\end{equation}
We assume that $E_u$ is an orientable bundle and let $U_-$ be a global non-vanishing section of $E_u$, called an \emph{unstable horocyclic vector field}. 
By Hurder-Katok \cite{HuKa}, $U_-$ is a vector field that can be chosen with regularity 
$C^{2-\eps}(\mc{M})$ for all $\eps>0$. For a contact Anosov flow, there is a preserved smooth measure $dm:=\alpha\wedge d\alpha$, thus $P$ is skew-adjoint on $L^2(\mc{M},dm)$ and $R_P(\la)$ is analytic in ${\rm Re}(\la)>0$ (the $L^2$-spectrum is the whole imaginary line). The operator $U_-$ can be viewed as acting on the negative Sobolev space $H^{-s}(\mc{M})$ for $s<1$ as follows: for $u\in H^{-s}(\mc{M})$, for all $f\in C^\infty(\mc{M})$, 
\[\cjg U_-u,f\cjd:=\cjg u,-U_-f-{\rm div}(U_-)f\cjd\]
where ${\rm div}(U_-)$ is the divergence of $U_-$ with respect to $dm$. 
\begin{theo}\label{main}
Let $\mc{M}$ be a smooth 3-dimensional oriented compact manifold and let $X$ be a smooth vector field generating a contact Anosov flow. Assume that the unstable bundle is orientable. For $P=-X$,  if $\la_0$ is a resonance of $P$ with ${\rm Re}(\la_0)>-\mu_{\min}$ and if $u$ is a generalized resonant state of $P$ with resonance $\la_0$, then $U_-u=0$. 
\end{theo}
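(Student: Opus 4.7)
The plan is to introduce $v := U_-u$, derive from the commutator $[P,U_-]$ a twisted transport equation for $v$, and obtain a contradiction with $v \neq 0$ by exploiting the expanding nature of the flow along $E_u$.

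\textbf{Commutator and cocycle identity.} From Faure-Sj\"ostrand and Dyatlov-Zworski, every generalised resonant state $u$ at $\la_0$ with $\Re(\la_0) > -\mu_{\min}$ lies in an anisotropic Sobolev space $\mc{H}^N$ and has $\WF(u)\subset E_u^* := (\rr X \oplus E_s)^\perp$. Since $U_-$ is a $C^{2-\eps}$ section of $E_u$, the distribution $v = U_-u$ is well-defined in a slightly rougher anisotropic space $\mc{H}^{N-1}$ with $\WF(v)\subset E_u^*$. Flow-invariance and one-dimensionality of $E_u$ give $[X,U_-] = -\la_u U_-$ for a continuous function $\la_u \geq 0$ whose Birkhoff averages are uniformly bounded below by $\mu_{\min}$; hence $[P,U_-] = \la_u U_-$. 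Reducing to the rank-one Jordan case $(P-\la_0)u=0$ by induction on the Jordan order (the induction closes cleanly because $U_-$ sends the rank-$j$ generalized eigenspace into the analogous eigenspace of $P-\la_u$ and lower Jordan levels vanish by hypothesis), a direct computation yields the twisted eigenequation
\[(P - \la_u)\,v = \la_0\,v,\]
which integrates along the flow to the cocycle identity
\[e^{tP}v = e^{t\la_0}\,B_t\,v,\qquad B_t(x) := \exp\!\Big(\int_0^t \la_u(\varphi_{-s}x)\,ds\Big) \geq C^{-1}e^{\mu_{\min}t}\ (t\geq 0).\]

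\textbf{Spectral contradiction.} Using the skew-adjointness of $P$ for $dm$, so that $e^{tP^*} = \varphi_t^*$, the cocycle identity dualises to
\[\cjg v,\phi\circ\varphi_t\cjd = e^{t\la_0}\,\cjg v,B_t\phi\cjd\qquad\forall\phi\in C^\infty(\mc{M}).\]
The right-hand side should grow at an effective exponential rate at least $\Re(\la_0)+\mu_{\min}$, while the left-hand side is controlled by $\|v\|_{\mc{H}^{N-1}}\|\phi\circ\varphi_t\|_{(\mc{H}^{N-1})^*}$, whose growth rate is bounded by the essential spectral abscissa of $e^{tP}$ on $\mc{H}^{N-1}$ and can be made strictly less than $\Re(\la_0)+\mu_{\min}$ by taking $N$ large. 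Since $\Re(\la_0)+\mu_{\min} > 0$, letting $t\to+\infty$ forces $\cjg v,\phi\cjd = 0$ for every $\phi$, hence $v = 0$. A cleaner rephrasing: $v$ would be a nonzero eigen-distribution of the Bowen-twisted transfer semigroup $L_t := B_t^{-1}e^{tP}$ at eigenvalue $e^{t\la_0}$; by Bowen's formula applied to the contact Anosov flow, the topological pressure of $-\la_u$ is zero, placing the leading Ruelle resonance of $-X-\la_u$ at $0$ with a spectral gap of size $\mu_{\min}$ on the appropriate anisotropic space, leaving no room for $e^{t\la_0}$ in the spectrum.

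\textbf{Main obstacle.} Making the amplification quantitative is the crux: one must derive a lower bound on $|\cjg v,B_t\phi\cjd|$ of order $e^{\mu_{\min}t}$ for a cleverly chosen $\phi$, ensuring that the oscillations of $B_t$ do not conspire to cancel in the pairing. This should be handled microlocally using $\WF(v)\subset E_u^*$ together with the concentration of $\phi$ near a recurrent point of near-minimal Birkhoff average of $\la_u$; equivalently, one must prove that the twisted generator $-X-\la_u$ admits no Pollicott-Ruelle resonance in the strip $\{\Re > -\mu_{\min}\}\setminus\{0\}$ whose resonant state has wavefront set contained in $E_u^*$. A secondary technical difficulty is the only-H\"older regularity of $\la_u$ inherited from the $C^{2-\eps}$ regularity of $U_-$, which obliges one to extend the Faure-Sj\"ostrand anisotropic Fredholm framework to accommodate such non-smooth potentials when studying $-X - \la_u$.
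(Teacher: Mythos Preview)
Your reduction is correct and coincides with the paper's second proof: setting $v=U_-u$ gives $(-X-r_--\la_0)v=0$ with $\WF(v)\subset E_u^*$, and the task is to show no such nonzero $v$ exists when $\Re(\la_0)>-\mu_{\min}$. The gap is in your ``spectral contradiction''. You arrange the cocycle so that a \emph{lower} bound $|\cjg v,B_t\phi\cjd|\geq c\,e^{\mu_{\min}t}$ is required, and neither of your proposed resolutions delivers it: the pressure identity ${\rm Pr}(-r_-)=0$ locates the top resonance of $-X-r_-$ at $0$ but gives no spectral gap of size $\mu_{\min}$ on anisotropic spaces, and the microlocal concentration idea is left vague. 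Your upper bound on $\cjg v,\phi\circ\varphi_t\cjd$ via the essential spectral abscissa on $\mc{H}^{N-1}$ is also not a norm bound --- genuine resonances of $-X$ (for instance at $0$) obstruct it.

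The paper's key move is to flip the cocycle so that only an \emph{upper} bound is needed. One writes
\[
\cjg v,f\cjd \;=\; e^{-\la_0 t}\,\cjg v,\;e^{-\int_0^t r_-\circ\varphi_s\,ds}\,f\circ\varphi_t\cjd\qquad (t>0),
\]
so the \emph{decaying} weight $e^{-\int_0^t r_-\circ\varphi_s\,ds}\leq Ce^{-\mu_{\min}t}$ now sits on the test function. If $v$ were a measure this would finish immediately; since $v$ is only $H^{-2+\eps}$ microlocally near $E_u^*$, the paper factors the order-$(2-\eps)$ cutoff near $E_u^*$ as $U_+^2$ composed with an operator of order $-\eps$ (using that the principal symbol of $U_+$ is elliptic on $E_u^*$), thereby transferring two \emph{stable} derivatives onto the weighted test function. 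Because $U_+$ contracts under $\varphi_t$ for $t>0$, these derivatives cost essentially nothing; the $C^{2-}$ regularity of $r_-$ costs only an arbitrarily small $e^{\delta t}$. This stable-direction integration by parts is the mechanism missing from your argument. The paper also gives a shorter first proof bypassing this estimate: the intertwining $U_-R_{-X}(\la)=R_{-X-r_-}(\la)U_-$ holds for $\Re(\la)\gg 0$, and the right side applied to $C^\infty$ inputs is \emph{analytic} in $\{\Re(\la)>-\mu_{\min}\}$ simply because the defining integral for $R_{-X-r_-}(\la)$ converges in $C^0$ there; taking residues yields $U_-\Pi_{\la_0}=0$ directly.
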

In view of the regularity of the stable/unstable foliation in our case, we have locally near each point $x_0\in \mc{M}$ a decomposition of $\mc{M}$ as a product 
$W_u\x W_s\x (-\eps,\eps)_t$ using the stable/unstable foliation, where $W_{u/s}$ are diffeomorphic to $(-\eps,\eps)$. The flow is $X=\pl_t$ is those coordinates, and Theorem \ref{main} says that a resonant state $w$ with resonance $\la_0$ (if ${\rm Re}(\la_0)>-\mu_{\min}$) is of the form 
\[ w(u,s,t)=e^{-\la_0t}\omega(s)\]
for some distribution $\omega$ on $W_s$, i.e the resonant state depends in a non-trivial way only on the variable $s$ of the stable leaves. In fact, due to the wave-front set analysis of resonant states in \cite{FaSj}, a resonant state $w$ can be restricted locally to each piece of local stable leaf (which is an embedded smooth submanifold), or alternatively the lift of $w$  to the universal cover $\til{\mc{M}}$ of $\mc{M}$ can be restricted to the stable leaves $\til{\mc{M}}$.

The horocyclic invariance of the first band of resonances was shown in constant cuvature by Dyatlov-Faure-Guillarmou \cite{DFG}, and follows also for hyperbolic surfaces from the work of Flaminio-Forni \cite{FlFo}. It is quite stricking that this type of properties still holds for variable curvature cases. The first resonant state for a certain transfer operator associated to an Anosov diffeomorphism on $\mathbb{T}^2$ is also shown to be horocyclic invariant by Giuletti-Liverani \cite{GiLi}. There are other related cases which appeared in the work of Dyatlov \cite{Dy} for resonances of semi-classical operators with $r$-normally hyperbolic trapped set, but the resonant states are only microlocally killed by some smooth pseudo-differential operator playing the role of $U_-$.

In Theorem \ref{mainresult}, we prove a more general result which applies to  the operator $P:=-X+V$ where 
$V$ is a regular potential, and where the unstable derivative $U_-$ is replaced by $U_-+\alpha_V$ for some appropriate function $\alpha_V$ depending on $V$. The operator $U_-+\alpha_V$ can be viewed as a covariant derivative in the unstable direction. Interesting particular cases are for $V=r_-$, where resonant states are in $\ker U_-^*$, and
for $V=\demi r_-$, the case studied intensively by Faure-Tsujii \cite{FaTs2}; see Corollary 
\ref{maincor}. 
%In fact, in Theorem \ref{mainresult} we give a formlula between resolvents of 
%$-X$ and of $-X-r_-$ involving $U_-$, and Theorem \ref{main} follows from that relation.

Using the work of Faure-Tsujii \cite{FaTs}, we deduce the following result about existence of an infinite dimensional space of horocyclic invariant distributions: 
\begin{corr}\label{cormain}
Let $\mc{M}$ be a smooth $3$-dimensional oriented manifold and let $X$ be a smooth vector field generating a contact Anosov flow. Assume that $E_u$ is orientable and that $\mu_{\max}<2\mu_{\min}$. Then, for each $\eps>0$ small, there exist infinitely many resonant states in $\ker U_-$ with associated resonances contained in the band 
\[\{{\rm Re}(\la)\in [-\demi \mu_{\max}-\eps,
-\demi\mu_{\min}+\eps]\}.\] These resonant states belongs to the Sobolev space $H^{-\tfrac{1}{2}\frac{\mu_{\max}+2\eps}{\mu_{\min}}}(\mc{M})$.
\end{corr}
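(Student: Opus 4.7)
The plan is to concatenate Theorem \ref{main} with the lower bound on the counting function for the first band of Ruelle resonances established by Faure-Tsujii \cite{FaTs}. That work shows that for a contact Anosov flow in dimension $3$ with orientable unstable bundle, the number of resonances of $P=-X$ in the first band $\{\Re(\la)\in [-\demi\mu_{\max}-\eps,-\demi\mu_{\min}+\eps]\}$ grows at least like a Weyl law in $|\Im(\la)|$; in particular this band contains infinitely many resonances, whose resonant states arise as $\mc{H}^N$-eigenvectors of the Fredholm extension of $P$ on the anisotropic Sobolev spaces of \cite{FaSj}.

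The first step is to observe that the pinching hypothesis $\mu_{\max}<2\mu_{\min}$ is precisely what places the entire first band strictly to the right of the critical line $\{\Re(\la)=-\mu_{\min}\}$. Indeed, for $\eps>0$ small enough one has $-\demi\mu_{\max}-\eps>-\mu_{\min}$, so any resonance $\la_0$ produced by \cite{FaTs} satisfies $\Re(\la_0)>-\mu_{\min}$. Theorem \ref{main} then applies to every generalized resonant state $u$ at each such $\la_0$ and gives $U_-u=0$. Combined with the infinite Weyl-type count from \cite{FaTs}, this yields infinitely many resonant states in $\ker U_-$ with resonances in the claimed band.

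It remains to pin down the Sobolev regularity of these resonant states. The Faure--Sj\"ostrand construction \cite{FaSj} realises resonances of $P$ in the half-plane $\{\Re(\la)>C_0-\mu N\}$ as genuine $\mc{H}^N$-eigenvectors, with the contraction rate $\mu$ in the definition of $\mc{H}^N$ taken arbitrarily close to $\mu_{\min}$; a standard wavefront-set argument then shows that every such eigenvector is microlocally smooth along the unstable direction and lies globally in the isotropic space $H^{-N}(\mc{M})$. To cover the band up to real part $-\demi\mu_{\max}-\eps$ one must choose $N$ slightly larger than $\demi\mu_{\max}/\mu_{\min}$, and taking $N=\demi(\mu_{\max}+2\eps)/\mu_{\min}$ yields exactly the stated exponent $H^{-\frac{1}{2}\frac{\mu_{\max}+2\eps}{\mu_{\min}}}(\mc{M})$. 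The principal obstacle in a fully rigorous write-up is this last bookkeeping step: tracking the dependence of the Sobolev index on the constants $\mu$ and $C_0$ in the anisotropic construction, and verifying that $\mu$ can indeed be taken arbitrarily close to $\mu_{\min}$ while keeping the resonance under control. Once this is done, the corollary follows directly from the combination of \cite{FaTs} and Theorem \ref{main}.
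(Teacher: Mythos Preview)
Your argument is correct and follows the same route as the paper: combine Theorem~\ref{main} with the Faure--Tsujii band-structure result (stated here as Theorem~\ref{Fauretsujii}) applied with $V=0$, using the pinching $\mu_{\max}<2\mu_{\min}$ to place the entire first band inside $\{\Re(\la)>-\mu_{\min}\}$. The Sobolev index you extract is exactly the one the paper claims, and your derivation from Proposition~\ref{fauresjos}(2) (with $C_0=0$ for the contact case) is the intended one; your caution about taking $\mu$ close to $\mu_{\min}$ is unnecessary here since that proposition already gives the threshold $C_0-N\mu_{\min}$ directly.
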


The proof of Theorem \ref{main} follows the strategy of \cite{DFG} for hyperbolic surfaces. Let us briefly explain the idea. 
 A resonant state $u$ for $-X$ with resonance $\la_0\in \cc$ satisfies $(-X-\la_0)u=0$ where $u$ is a distribution whose microlocal singularities (wave-front set) are 
contained in the subbundle$E_u^*\subset T^*\mc{M}$ defined by the condition $E_u^*(\rr X\oplus E_u)=0$. 
Applying $U_-$ to the equation $(-X-\la_0)u=0$, we get $(-X-\la_0-r_-)U_-u=0$ and one can show that $\omega:=U_-u$ also has its main microlocal singularities at $E_u^*$ by using the regularity $r_-\in C^{2-\eps}(\mc{M})$, for all $\eps>0$.
Now if $\la_0$ belongs to the spectral region where $-X-r_-$ has no eigenstates with microlocal singularities in $E_u^*$, we can conclude that $\omega=0$. We prove that this condition is satisfied when ${\rm Re}(\la_0)>-1=-\mu_{\min}$.

We actually provide two proofs in the paper. The first one, contained in Theorem \ref{main}, uses resolvent identities: we show that $U_-$ intertwines the resolvent of $-X$ with that of $-X-r_-$. The second proof is more technical, uses microlocal methods and follows the argument just described above.

Applying the results of Corollary \ref{maincor} with $V=r_-$, we also get that resonant states for $-X+r_-$ are obstructions to solving the cohomological equation $U_-f=g$ with $g\in C^{1}(\mc{M})$ for the unstable vector field $U_-$, in the spirit of the work of Flaminio-Forni \cite{FlFo} in constant curvature; see the discussion in Section \ref{lastsec}.

We notice that our proof would apply similarly in higher dimension under pinching conditions on the Lyapunov condition, except that one needs to use a covariant derivative in the unstable direction. The horocyclic invariance of resonant states apply only to finitely many resonant states, for there is only finitely many resonance in the complex region where our result would hold, by a result of Tsujii \cite{Ts}. We have thus decided to focus only on the case of dimension $3$, where in addition the regularity of $E_u$ is known to be better.\\

\textbf{Acknowledgements.} C.G. is supported by ERC consolidator grant IPFLOW. F.F. and C.G. are supported by the grant ANR 13-BS01-0007-01. We thank T. Alazard, S. Crovisier, S. Dyatlov, S. Gou\"ezel, B. Hasselblatt, C. Liverani and T. De Poyferr\'e for useful discussions and references.

\section{Stable/unstable bundles}

\subsection{Anosov flows and the regularity of stable/unstable bundles} 
Let $\mc{M}$ be a smooth compact $3$-dimensional oriented manifold and let $X$ be an  vector field, with flow denoted by $\varphi_t$ that is Anosov. 
We fix a smooth metric $G$ on $\mc{M}$ and 
we denote by $E_s$ and $E_u$ the stable and unstable bundles so that one has the 
flow-invariant continuous splitting \eqref{split} with \eqref{estimeehyp-intro}.
Let $\alpha$ be the continuous flow-invariant $1$-form on $\mc{M}$ so that $\ker \alpha=E_u\oplus E_s$ and $\alpha(X)=1$. By Hurder-Katok \cite[Theorem 2.3]{HuKa}, if  $\alpha\in C^1(\mc{M};T^*\mc{M})$ then $\alpha\in C^\infty(\mc{M};T^*\mc{M})$ and either 
$\alpha\wedge d\alpha=0$ or it is a nowhere vanishing $3$-form and $\varphi_t$ is a \emph{contact flow}: $i_Xd\alpha=0$ and $d\alpha$ is symplectic on $\ker \alpha$.
We shall assume in what follows that we are in case of a contact flow. 
In that case, since the symplectic form $d\alpha$ on $\ker \alpha$ is preserved by $\varphi_t$, we can find $\nu_{\min/\max}(z), \mu_{\min/\max}(z)$ such that
\[\nu_{\min}(z)=\mu_{\min}(z), \quad \nu_{\max}(z)=\mu_{\max}(z).\]
Let us also define the dual Anosov decomposition 
\[ T^*\mc{M}=\rr \alpha\oplus E^*_u\oplus E_s^*, \quad \textrm{ with }E_s^*(E_s\oplus \rr X)=0,\quad  E_u^*(E_u\oplus \rr X)=0.\]
In \cite{HuKa}, Hurder-Katok proved the following regularity statement on the unstable/stable bundles.
\begin{lemm}[Hurder-Katok]\label{Hurder-Katok}
For a smooth contact flow in dimension $3$, the regularity of the bundles $E_u$ and $E_s$ is 
\begin{equation}
\forall r<2, \,\,  E_u \in C^{r}  , \quad \forall r<2,\,\, E_s\in C^{r}.
\end{equation}
\end{lemm}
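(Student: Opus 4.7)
The plan is to follow the strategy of Hurder-Katok, combining the general regularity theory for invariant subbundles of hyperbolic flows (based on graph transforms and bunching conditions on Lyapunov exponents) with the extra rigidity coming from the preserved smooth symplectic form $d\alpha$ on $\ker\alpha$.

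First, I would recall the baseline fact that for any smooth Anosov flow, the line bundles $E_u, E_s \subset T\mc{M}$ are automatically H\"older continuous of some positive exponent, obtained by viewing them as fixed points of the graph transform acting on cone fields and applying the contraction mapping principle in a space of H\"older sections. To upgrade this to $C^r$ regularity, one uses the standard Hirsch-Pugh-Shub / Hasselblatt bunching theorem: if along every orbit the hyperbolicity is strong enough that the unstable expansion rate dominates the $r$-th power of the (inverse) contraction rate uniformly, then the invariant bundle is $C^r$. Concretely, in our setting one needs an inequality of the form $r \cdot \mu_{\max}^{\rm u} < \mu_{\min}^{\rm u} + \mu_{\min}^{\rm s}$ (written in terms of the rates in \eqref{estimeehyp-intro}) in order to obtain $C^r$ regularity of $E_u$, and symmetrically for $E_s$.

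Second, I would invoke the contact structure to tighten this bunching. Since $d\alpha$ restricts to a smooth symplectic form on $\ker\alpha = E_s \oplus E_u$ preserved by $d\varphi_t$, and since $E_s$ and $E_u$ are Lagrangian line subbundles in dimension $3$, the pairing $d\alpha : E_s \times E_u \to \rr$ is non-degenerate and flow-invariant. This forces the identification $\nu_{\min/\max}(z) = \mu_{\min/\max}(z)$ already noted in the excerpt: expansion in $E_u$ is exactly dual to contraction in $E_s$ at every point. Substituting this symmetry into the bunching inequality reduces it to $r \cdot \mu_{\max} < 2\mu_{\min}$ pointwise, but more importantly it makes the right notion of bunching sharp at $r = 2$: the symplectic duality means that the ``loss'' one incurs in the graph transform for $E_u$ is exactly compensated by the gain coming from the paired $E_s$ direction, so one can push the regularity up to, but not including, $C^2$.

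The delicate step, and the main obstacle, is the endpoint analysis: one has to show $C^r$ for every $r<2$, not merely for $r$ satisfying a strict bunching with some loss. Here I would argue as in Hurder-Katok by iterating the graph transform in H\"older-Zygmund spaces $C^r_*$ and carefully controlling the multiplicative cocycle constants using sub-additivity of $\log |d\varphi_t|_{E_{u/s}}|_G$, the existence of Oseledets Lyapunov exponents, and a Livsic-type smoothing to absorb the $\epsi$-loss coming from replacing the pointwise rates $\mu_{\max}(z), \mu_{\min}(z)$ by the uniform rates $\mu_{\max}, \mu_{\min}$. Once $C^r$ is established for some $r > 1$, applying the Journ\'e lemma (combining regularity along the stable and unstable foliations, together with the flow direction) to the section $z \mapsto E_u(z)$ of the Grassmannian bundle, one concludes joint regularity on $\mc{M}$, giving the stated $C^r$ for all $r < 2$. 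The symmetric argument applied to $E_s$ completes the proof.
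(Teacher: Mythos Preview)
The paper does not prove this lemma at all: it is stated with attribution and cited directly from \cite{HuKa} as a known result, with no argument given. So there is no ``paper's own proof'' to compare against; your sketch must stand on its own.

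On its merits, your outline has a genuine gap at exactly the place you flag as ``the delicate step.'' The Hirsch--Pugh--Shub/Hasselblatt bunching machinery, even after feeding in the symplectic identity $|d\varphi_t|_{E_s(z)}|\cdot|d\varphi_t|_{E_u(z)}|=1$, only yields $E_u\in C^r$ for $r$ satisfying a \emph{uniform} inequality of the type $r<2\mu_{\min}/\mu_{\max}$ (or a refined pointwise/orbitwise version thereof), not $r<2$ unconditionally. Your attempt to recover the missing range by ``iterating the graph transform in H\"older--Zygmund spaces,'' invoking subadditivity and a ``Liv\v{s}ic-type smoothing to absorb the $\eps$-loss,'' is not an argument: subadditivity and Oseledets give you the asymptotic exponents you already used, and Liv\v{s}ic theory trades coboundary equations for regularity, which is not what is needed here. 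No amount of bookkeeping with cocycle constants upgrades a bunching exponent from $2\mu_{\min}/\mu_{\max}$ to $2$ when the flow is not $1$-pinched. The Journ\'e lemma at the end is also misapplied: Journ\'e combines regularity along two transverse foliations to get joint regularity, but here the issue is establishing the leafwise regularity in the first place, not assembling it.

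The actual Hurder--Katok proof is not a bunching argument. It is specific to codimension-one foliations in dimension $3$: one first gets $C^1$ regularity of the weak foliations from the contact (volume-preserving) structure by classical results, and then uses a cocycle/cohomological argument (the ``Anosov cocycle'' controlling the obstruction to higher smoothness, tied to the transverse invariant form) to bootstrap from $C^1$ to the Zygmund class $\Lambda_*^1$, which embeds in $C^{2-\eps}$ for all $\eps>0$. The strong bundles then inherit this regularity via $E_u=\ker\alpha\cap(E_u\oplus\rr X)$ since $\alpha$ is smooth. If you want to supply a proof rather than a citation, that is the mechanism you need to reproduce.
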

By regularity $C^r$ of a bundle, it is meant that the bundle is locally spanned by vector fields 
which have $C^r$ coefficients in smooth charts on $\mc{M}$. For what follows, we will write $f\in C^{2-}(\mc{M})$ to mean that a function/vector field belongs to $\cap_{\delta>0}C^{2-\delta}(\mc{M})$.

Anosov \cite{An} proved  that  there exist local stable and unstable smooth submanifolds $W_s(z),W_u(z)$ of $\mc{M}$ at each point $z$,  whose dependence in $z$ is only H\"older and such that $T_zW_u(z)=E_u(z)$ and $T_zW_s(z)=E_s(z)$. The submanifolds $W_u$ form a foliation near $z$ and from \cite[Lemma 3.1]{DMM}, there are continuous maps 
\[\Lambda: V_1\x V_2\to M , \quad V_1\subset \rr ,V_2\subset \rr^2 \]
such that $\Lambda_y: V_1\to M$ defined by $\Lambda_y(x)=\Lambda(x,y)$ is a $C^\infty$
embedding with image an unstable local submanifold $W_u(z)$ for some $z$ and the derivatives $\pl_x^\beta \Lambda $ are continuous on $V_1\x V_2$ for all $\beta\in \nn$.
The same holds for the stable foliation.

Next, we want to make sense of unstable derivatives. 
\begin{lemm}\label{existunstable}
Assume $X$ generates a smooth contact flow on an orientable $3$-dimensional manifold 
$\mc{M}$ and that $E_u$ is an orientable bundle. 
There exists a non-vanishing vector field $U_-$ on $\mc{M}$ with regularity $C^{2-}(\mc{M};T\mc{M})$ such that $U_-(z)\in E_u(z)$ for all $z\in\mc{M}$, and there exists a function $r_-$ with regularity $C^{2-}(\mc{M})$ such that
\begin{equation}\label{XU_-}
[X,U_-]=-r_-U_-.
\end{equation} 
The function $r_-$ satisfies $d\varphi_{-t}(z).U_-(z)=e^{-\int_{-t}^{0} r_-(\varphi_s(z)) ds}U_-(\varphi_{-t}(z))$ for $t\geq 0$ and
\begin{equation}\label{estimeejacuns} 
Ce^{-t\mu_{\max}(z)}\leq e^{-\int_{-t}^0 r_-(\varphi_s(z))ds}\leq Ce^{-t\mu_{\min}(z)}.
\end{equation} 
If $a_i$ are the coefficients of $U_-$ in a smooth coordinates system, then $U_-^k(a_i)$ are continuous for all $k\in \nn$.
The same properties hold with $U_+$ replacing $U_-$, $E_s$ replacing $E_u$, $r_+$ replacing $r_-$, with $d\varphi_{t}(z).U_+(z)=e^{-\int_{0}^{t} r_+(\varphi_s(z)) ds}U_+(\varphi_{t}(z))$, and $U_+$ is a $C^{2-}(\mc{M};T\mc{M})$ section of $E_s$ with local coefficients $b_i$ such that $U_+^k(b_i)$ are continuous for all $k\in \nn$.
\end{lemm}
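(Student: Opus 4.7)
The plan is to construct $U_-$ from the $C^{2-}$ regularity of the line bundle $E_u$, derive the commutation relation from its one-dimensionality together with flow-invariance, and establish the additional regularity and leaf-smoothness properties at the end. First, since $E_u$ is an orientable line bundle over the paracompact manifold $\mc{M}$, it is topologically trivial; combining this with Lemma \ref{Hurder-Katok}, I can patch local $C^{2-}$ orienting frames via a smooth partition of unity to obtain a global nowhere-vanishing section $U_-\in C^{2-}(\mc{M};T\mc{M})$ tangent to $E_u$. The symmetric construction yields $U_+\in C^{2-}(\mc{M};T\mc{M})$ tangent to $E_s$ and nowhere zero, using that $E_s$ inherits orientability from $T\mc{M}$, $\rr X$ and $E_u$.

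Next, the one-dimensionality of $E_u$ together with $d\varphi_{-t}(E_u)=E_u$ produces a multiplicative scalar cocycle $\lambda$ over the flow defined by
\begin{equation*}
d\varphi_{-t}(z)U_-(z)=\lambda(t,z)U_-(\varphi_{-t}(z)),\qquad \lambda(0,z)=1.
\end{equation*}
Define $r_-(z):=-\partial_t\lambda(t,z)|_{t=0}$; the cocycle identity differentiated in $t$ gives the ODE $\partial_t\lambda(t,z)=-r_-(\varphi_{-t}(z))\lambda(t,z)$, whose integration yields $\lambda(t,z)=\exp(-\int_{-t}^0 r_-(\varphi_s(z))\,ds)$, and the standard Lie derivative computation $[X,U_-](z)=\partial_t|_{t=0}d\varphi_{-t}(\varphi_t z)U_-(\varphi_t z)$ produces $[X,U_-]=-r_-U_-$. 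The two-sided bound \eqref{estimeejacuns} then follows by evaluating the defining equation of $\lambda$ in the Riemannian norm $|\cdot|_G$ and invoking the hyperbolicity estimates \eqref{estimeehyp-intro}, together with the fact that $|U_-|_G$ is pinched between two positive constants on compact $\mc{M}$.

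The main obstacle is to justify the $C^{2-}$ regularity of $r_-$: a direct coordinate computation $r_-=(U_-(X^k)-X(a_k))/a_k$ only gives $C^{1-}$, since $X(a_k)$ is a transverse derivative of a $C^{2-}$ coefficient. To upgrade this I would exploit the contact form by introducing the dual one-form $\theta_-:=d\alpha(U_+,\cdot)/d\alpha(U_+,U_-)\in C^{2-}(\mc{M};T^*\mc{M})$, which satisfies $\theta_-(U_-)=1$ and annihilates $E_s\oplus\rr X$, writing $\lambda(t,z)=\theta_-(z)(d\varphi_{-t}(z)U_-(z))$, and using the smoothness of $\alpha$ and of the flow jointly in $(t,z)$ to transfer the $C^{2-}$ regularity of $\theta_-$ and $U_-$ to the $t$-derivative of $\lambda$ at $0$, which is $-r_-$. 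Finally, the smoothness of the local coefficients $a_i$ of $U_-$ in the direction of $U_-$ itself follows from the Anosov theorem recalled before the statement: the parametrization $\Lambda:V_1\times V_2\to\mc{M}$ has all derivatives $\partial_x^\beta\Lambda$ continuous, so $U_-$ restricted to each smooth unstable leaf $W_u(z)$ is a smooth nowhere-vanishing vector field on the smooth one-dimensional manifold $W_u(z)$, whence each iterated derivative $U_-^k(a_i)$ is smooth along each leaf and continuous transversely through $\Lambda$.
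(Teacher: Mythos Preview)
Your overall structure is sound and matches the paper's, but the step where you upgrade $r_-$ from $C^{1-}$ to $C^{2-}$ does not work as written. Writing $\lambda(t,z)=\theta_-\big(\varphi_{-t}(z)\big)\big(d\varphi_{-t}(z)U_-(z)\big)$ (note that $\theta_-$ must be evaluated at $\varphi_{-t}(z)$, not at $z$), the $t$-derivative at $t=0$ contains the term $\partial_t|_{t=0}\,\theta_-\big(\varphi_{-t}(z)\big)=-(\mathcal L_X\theta_-)(z)$, which requires a \emph{spatial} derivative of the $C^{2-}$ one-form $\theta_-$ and is therefore only $C^{1-}$. The smoothness of $\alpha$ and of the flow does not help here: $\theta_-$ is built from $U_+$, which is only $C^{2-}$, and the chain rule forces a derivative onto it. So your argument yields precisely the $C^{1-}$ regularity you already flagged as insufficient. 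The paper's footnote in fact remarks that the direct route (showing $\partial_t f\in C^{2-}$) is not available without further input.

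The paper circumvents this by a different, more algebraic trick. It does not work with an arbitrary $C^{2-}$ section of $E_u$: it first picks a unit section $U$, then a \emph{smooth} vector field $U_\eps$ close to $U$, completes $(X,U_\eps)$ to a smooth global frame $(X,U_\eps,S)$, writes $U=a_\eps U_\eps+bX+cS$ with $a_\eps,b,c\in C^{2-}$, and then \emph{renormalizes} to $U_-:=(1/a_\eps)U$, so that the $U_\eps$-component of $U_-$ is identically $1$. Computing $[X,U_-]=-r_-U_-$ in this smooth frame and reading off the $U_\eps$-component gives $-r_-=h+(c/a_\eps)k$, where $h,k\in C^\infty$ are the $U_\eps$-components of $[X,U_\eps]$ and $[X,S]$. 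Thus $r_-$ is a smooth function plus a smooth function times a $C^{2-}$ function, hence $C^{2-}$; no derivative of a $C^{2-}$ object is ever taken. The point is that the specific rescaling kills the ``bad'' terms $X(a_\eps)$ that would otherwise appear. Your construction of $U_-$ as an arbitrary global $C^{2-}$ section does not have this feature, and no choice of dual one-form will recover it.
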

\begin{proof}
The orientability of $E_u$ insures that there exists a non-vanishing vector field $U$ which is a section of $E_u$, and we normalize it so that its $G$-norm is $||U||_G=1$. It can be chosen to be globally $C^{2-}(\mc{M})$ by Lemma \ref{Hurder-Katok}. By the remark following the Lemma (which describes the unstable foliation regularity), we also have that the coefficients $a_i$ of $U$ in local coordinates are such that $U^n(a_i)$ are continuous for all $n\in\nn$. 
We approximate $U$ by a smooth vector field $U_\eps$ in a way that $|U-U_\eps|_G\leq \eps$ for $\eps>0$ small. Since $\mc{M}$ is oriented and $3$-dimensional (thus parallelizable), 
we can find a smooth vector field $S$ so that $(X,U_\eps,S)$ is a global smooth basis of $T\mc{M}$, and we write $U=a_\eps U_\eps+bX+cS$ with $|a_\eps-1|=\mc{O}(\eps)$ and $a_\eps,b,c\in C^{2-}(\mc{M})$. Let us define $U_-:=(1/a_\eps)U$ which is also a $C^{2-}(\mc{M})$ non-vanishing section of $E_u$ for $\eps>0$ fixed small enough.
Since $d\varphi_t(z).E_u(z)=E_u(\varphi_t(z))$, we have $d\varphi_t(z).U_-(z)=f(t,z)U_-(\varphi_t(z))$ for some $f(t,z)\in C^{2-}(\rr\x \mc{M})$ with $f(t,z)>0$, and 
$\pl_tf(t,z)\in C^{1-}(\rr\x\mc{M})$\footnote{It is probably known from experts that $\pl_tf(t,z)\in C^{2-}(\rr \x \mc{M})$, from which  $r_-\in C^{2-}(\mc{M})$ would follow, but we haven't found references for such a fact, which is the reason why we use the approximation argument involving $U_\eps$.}. We also have $f(s+t,z)=f(s,z)f(t,\varphi_s(z))$.
We differentiate at $t=0$ and get \eqref{XU_-} with $r_-(z):=\pl_tf(0,z)/f(0,z)$ and  more generally $\pl_sf(s,z)/f(s,z)=r_-(\varphi_s(z))$. A priori $r_-\in C^{1-}(\mc{M})$
but a small computation using $[X,U_-]=-r_-U_-$ implies that 
\[-r_- =h+ck \]
where $h,k\in C^\infty(\mc{M})$ are the $U_\eps$ components of $[X,U_\eps]$ and $[X,S]$ in the basis $(X,U_\eps,S)$. Thus $r_-\in C^{2-}(\mc{M})$. The regularity of the coefficients 
of $U_-$ when differentiated twice in the direction $U_-$ follows from the same property as for  $U$. 
By definition of $r_-$ we also have that
\[  |d\varphi_{-t}(z)U_-(z)| = e^{-\int_{-t}^0r_-(\varphi_s(z))ds}\]
and this completes the proof.
\end{proof}

\begin{rem}
We notice that $U_\pm$ are not uniquely defined: one can always multiply $U_\pm$ by a positive smooth function $f$, and $fU_\pm$ would satisfy all the same properties as $U_\pm$ described in Lemma \ref{existunstable}. On the other hand, the kernel of $U_-$ is independent of the choice of non-vanishing section $U_-$ of $E_u$.
\end{rem} 

It is interesting to give the following interpretation to \eqref{XU_-}, which explains why the operator $P=-X-r_-$ appears naturally: the flow acts on the bundle $E_s^*$, and if $\omega$ is a non-vanishing section of $E_s^*$ defined by $\omega|_{E_s\oplus X}=0$ and $\omega(U_-)=1$, we have $\mc{L}_X\omega=r_-\omega$; thus  
for each $f\in C^2(\mc{M})$,
\[ \mc{L}_{-X}(f\omega)=(-Xf-r_-f)\omega.\]
The map $\pi: C^{2-}(\mc{M}; E_s^*)\to C^{2^-}(\mc{M})$ defined by $\pi(h):=h(U_-)$ is an isomorphism with inverse $e:C^{2^-}(\mc{M})\to C^{2-}(\mc{M}; E_s^*)$ given by $e(f)=f\omega$, one has $\pi \mc{L}_{-X}e=-X-r_-$
and \eqref{XU_-} can be reinterpreted as the identity: for each $f\in C^{\infty}(\mc{M})$
\[     \mc{L}_{-X} d^uf=d^u\mc{L}_{-X}f \]
where $d^u: C^{\infty}(\mc{M})\to C^{2-}(\mc{M},E_s^*)$ is the operator defined by $d^uf:=df|_{E_u}$. We refer to \cite[Section 3.3.2]{FaTs2} for a related discussion.\\

To conclude this section, we define the minimal and maximal expansion rates by 
\begin{equation}\label{muminmax} 
\mu_{\min}:= \lim_{t\to +\infty}\inf_{z\in\mc{M}} \frac{1}{t}\int_{0}^t r_-(\varphi_s(z))ds, \quad 
\mu_{\max}:= \lim_{t\to +\infty}\sup_{z\ni\mc{M}} \frac{1}{t}\int_{0}^t r_-(\varphi_s(z))ds.
\end{equation}
First, we remark that the two limits as exist as $t\to +\infty$ by Fekete's lemma 
since $F_1(t):=\sup_{z\in\mc{M}} \int_{0}^t r_-(\varphi_s(z))ds$ is easily seen to be a subadditive function and $F_2(t):=\inf_{z\ni\mc{M}} \int_{0}^t r_-(\varphi_s(z))ds
$ is superadditive. By Lemma \eqref{existunstable}, for each $\eps>0$, there is $C_\eps$ 
such that for all $t\geq 0$ and all $z\in \mc{M}$
\begin{equation}\label{boundwithmu}
C_\eps^{-1}e^{-t(\mu_{\max}+\eps)}\leq \Big|d\varphi_{-t}(z)|_{E_u}\Big|_G\leq C_\eps e^{-t(\mu_{\min}-\eps)} \end{equation}

\subsection{The case of geodesic flow} 
To illustrate the discussion above, let us discuss the special case of the geodesic flow of negatively curved surfaces.
Let $(M,g)$ be a smooth oriented compact Riemannian surface with Gauss curvature $K(x)<0$ and let 
$SM$ be its unit tangent bundle with the projection $\pi_0:SM\to M$. We define $\mc{M}=SM$ and the geodesic flow at time $t\in\rr$ is denoted by 
$\varphi_t:SM\to SM$, its generating vector field is denoted by $X$ as above. 
The generator of rotations $R_s(x,v):=(x,e^{is}v)$ in the fibers of $SM$ is a smooth vertical vector field denoted by $V$. 
Let $X_\perp:=[X,V]$, this is a horizontal vector field and $(X,X_\perp,V)$ is an orthonormal basis for the Sasaki metric $G$ on $SM$. We have the commutator formulas (see for example \cite{PSU})
\begin{equation}\label{commut}
[X,X_\perp]= -KV, \quad [V,X_\perp]=X.
\end{equation}

The Jacobi equation along a geodesic $x(t)=\pi_0(\varphi_t(x,v))$ is 
\begin{equation}\label{Jacobi} 
\ddot{y}(t)+K(x(t))y(t)=0.\end{equation}
For $(x,v)\in SM$ and $a,b\in \rr$, one has 
\begin{equation}\label{dvarphi}
d\varphi_t(x,v).(-aX_\perp+bV)=-y(t)X_\perp(\varphi_t(x,v))+\dot{y}(t)V(\varphi_t(x,v))
\end{equation}
if $y(t)$ solves the Jacobi equation with $y(0)=a,\dot{y}(0)=b$. 
Notice that the function $r(t)=\dot{y}(t)/y(t)$ solves the Riccati equation
\begin{equation} \label{riccati}
\dot{r}(t)+r(t)^2+K(x(t))=0
\end{equation}
for the times so that $y(t)\not=0$. 
For $T\in \rr$, let $y_T(t,x,v)$ be the solution of the Jacobi equation \eqref{Jacobi}  along the geodesic 
$x(t)=\pi_0(\varphi_t(x,v))$ with conditions
\[y_T(0,x,v)=1,  \quad y_T(T,x,v)=0.\]
Since $g$ has no conjugate points, $y_T(t,x,v)\not=0$ when $t\not=T$.
Let $r_T(t,x,v):=\dot{y}_T(t,x,v)/y_T(t,x,v)$ which solves \eqref{riccati}, it is defined for $t<T$ and $r_T(t,x,v)\to -\infty$ as $t\to T$. By Hopf \cite{Ho}, the following limits exist for all 
$t,x,v$
\[ r_+(t,x,v):=-\lim_{T\to +\infty}r_T(t,x,v), \quad r_-(x,v):=\lim_{T\to +\infty}r_{-T}(t,x,v).\]
We denote $r_\pm(x,v):=r_\pm(0,x,v)$ and we see that $r_\pm(t,x,v)=r_\pm(\varphi_t(x,t))$.
We have $r_\pm>0$  and they solve the Riccati equation on $SM$
\begin{equation}\label{riccati2} 
\mp Xr_\pm+r_\pm^2+K=0.
\end{equation}
The functions $r_\pm(x,v)$ are smooth in the $X$ direction and are globally H\"older.
We define the vector fields  
\[ U_-:=X_\perp -r_-V ,\quad U_+:=X_\perp +r_+V \]
\begin{lemm}\label{Upm}
The following commutation relations hold
\[ [X,U_-]=-r_-U_-, \quad [X,U_+]=r_+U_+,\]
the function $r_\pm$ are in $C^{2-}(\mc{M})$ and   
\[ d\varphi_t(x,v).U_-(x,v)=e^{\int_{0}^tr_-(\varphi_s(x,v))ds}U_-(\varphi_t(x,v)),\]
\[ d\varphi_t(x,v).U_+(x,v)=e^{-\int_{0}^tr_+(\varphi_s(x,v))ds}U_+(\varphi_t(x,v)).\]
\end{lemm}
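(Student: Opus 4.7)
The strategy is to verify all three assertions (the commutator identities, the flow-action formulas, and the $C^{2-}$ regularity of $r_\pm$) by direct computation, using only the structure relations \eqref{commut}, the Riccati equation \eqref{riccati2}, the Jacobi field formula \eqref{dvarphi}, and the Hurder-Katok regularity of the stable/unstable bundles (Lemma \ref{Hurder-Katok}).

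The commutator relations reduce to a one-line expansion. From $X_\perp=[X,V]$ we have $[X,V]=X_\perp$, and $[X,X_\perp]=-KV$ by \eqref{commut}. The Riccati equation \eqref{riccati2} gives $Xr_-=-r_-^2-K$ (lower sign) and $Xr_+=r_+^2+K$ (upper sign). Then
\[
[X,U_-]=[X,X_\perp]-X(r_-)V-r_-[X,V]=-KV-(-r_-^2-K)V-r_-X_\perp=r_-^2V-r_-X_\perp=-r_-U_-,
\]
and the symmetric computation yields $[X,U_+]=r_+^2V+r_+X_\perp=r_+U_+$.

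For the flow-action formula, I plug the initial data $a=-1$, $b=-r_-(x,v)$ into \eqref{dvarphi}, so that $-aX_\perp+bV=U_-(x,v)$. If $y(t)$ denotes the resulting Jacobi field, then both the logarithmic derivative $\dot y(t)/y(t)$ and the function $t\mapsto r_-(\varphi_t(x,v))$ solve the same Riccati ODE $\dot r+r^2+K\circ\varphi_t=0$ with common initial value $r_-(x,v)$ at $t=0$; by uniqueness they coincide, $y$ never vanishes, and $u(t):=-y(t)$ satisfies $\dot u=r_-(\varphi_t)\,u$, $u(0)=1$. Integration gives $u(t)=\exp\bigl(\int_0^t r_-(\varphi_s(x,v))\,ds\bigr)$, and substitution into \eqref{dvarphi} yields $d\varphi_t(x,v).U_-(x,v)=u(t)\,U_-(\varphi_t(x,v))$; the $U_+$ case is symmetric upon choosing $b=r_+(x,v)$.

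The regularity of $r_\pm$ is the step that genuinely requires the geometry rather than just algebra, and I view it as the main obstacle. The flow formula together with $r_\pm>0$ shows that $|d\varphi_t.U_-|$ grows exponentially forward and decays exponentially backward, so $U_-$ is a non-vanishing section of the unstable bundle $E_u$; symmetrically $U_+$ spans $E_s$ (here the positivity of $r_\pm$, which is classical in the Hopf construction for negatively curved surfaces, selects the correct bundle). In the smooth global frame $(X,X_\perp,V)$ on the parallelizable 3-manifold $SM$, the smooth contact plane field $\ker\alpha$ is trivialized by $(X_\perp,V)$, and the line bundle $E_u$ appears as the graph of $-r_-$ over $X_\perp$ inside $\ker\alpha$. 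Consequently $r_-$ inherits exactly the regularity of $E_u$, which is $C^{2-}(\mc M)$ by Lemma \ref{Hurder-Katok}; the same argument applied to $E_s$ gives $r_+\in C^{2-}(\mc M)$.
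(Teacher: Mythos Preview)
Your proof is correct and follows essentially the same approach as the paper: the commutator computation via \eqref{commut} and the Riccati equation \eqref{riccati2}, the Jacobi field argument with uniqueness of the Riccati ODE to obtain the flow-action formula, and the deduction of $C^{2-}$ regularity of $r_\pm$ from Lemma~\ref{Hurder-Katok} by reading $E_u,E_s$ as graphs in the smooth frame $(X_\perp,V)$. Your write-up is slightly more detailed (e.g.\ you make explicit why $y$ never vanishes and why $U_\pm$ land in the correct bundles), but the ideas and the order of the argument are the same.
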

\begin{proof}
We just compute, using \eqref{commut}  and the fact that $r_\pm$ solves \eqref{riccati2},
\[ [X,X_\perp-r_-V]=-KV-X(r_-)V-r_-X_\perp=-r_-(X_\perp-r_-V)=-r_-U_-\]
and similarly for $[X,U_+]$. By \eqref{dvarphi}, we have for each $(x,v)\in SM$
\[ d\varphi_t(x,v).U_-=-y(t)X_\perp+\dot{y}(t)V\]
where $\ddot{y}+Ky=0$ and $y(0)=-1$ and $\dot{y}(0)=-r_-(x,v)$. Clearly we have 
$w:=\dot{y}/y$ which satisfies the Riccati equation \eqref{riccati} with $w(0)=r_-(x,v)$, thus 
$w(t)=r_-(\varphi_t(x,v))$. This implies 
\[ d\varphi_t(x,v).U_-(x,v)=-y(t)U_-(\varphi_t(x,v)).\]
and $y(t)=-e^{\int_{0}^tr_-(\varphi_s(x,v))ds}$, and it shows $U_\pm$ are sections of $E_u$ and $E_s$. By Lemma \ref{Hurder-Katok}, since $X_\perp,V$ is a smooth frame, we deduce that $r_\pm$ are in $C^{2-}(\mc{M})$.
\end{proof}
We remark that by Klingenberg \cite{Kl}, if the Gauss curvature satisfies $-k_0^2\leq K(x)\leq -k_1^2$
 for some $k_0>k_1>0$, then there exists $C>0$ (depending only on $k_0/k_1$) so that for each $z\in SM$ 
\[ \forall \xi\in E_s(z),\forall t\geq 0,  \,\,  Ce^{-k_0t}|\xi|_G\leq |d\varphi_t(z).\xi|_G \leq Ce^{-
k_1t}|\xi|_G.\]
In particular this implies the bounds 
\begin{equation}\label{boundsonexp}
k_0\geq \mu_{\max}(z)\geq \mu_{\min}(z)\geq k_1.
\end{equation}

\section{Resonant states and horocyclic invariance}

\subsection{Analytic preliminaries}
We first recall basic facts about microlocal analysis.
Let $dm:=\alpha\wedge d\alpha$ be the contact measure  on $\mc{M}$ associated to the contact form $\alpha$, that is invariant by the flow. 
We use the notation $H^{s}(\mc{M})$ for the $L^2$-based Sobolev space (with respect to $dm$) of order 
$s\in \rr$, the space $C^{\gamma}(\mc{M})$ denotes the Banach space of H\"older functions with order $\gamma\in \rr^+\setminus \nn$; for $k\in \nn_0$ we shall write $C^k(\mc{M})$ for the space of functions $k$-times differentiable and with continuous $k$-derivatives. We will write $(C^{\gamma}(\mc{M}))'$ for their dual spaces and $C^{\gamma-}(\mc{M})=\cap_{\eps>0}C^{\gamma-\eps}(\mc{M})$.
We recall the embedding (see \cite[Chapter 7.9]{H"o}) 
\begin{equation}\label{embeddinghol}
\textrm{ if }\gamma \not\in \nn, \,\, C^\gamma(\mc{M})\subset H^{s}(\mc{M}) \textrm{ for }s<\gamma, \quad \textrm{ if }k\in\nn_0, \, C^k(\mc{M})\subset H^{k}(\mc{M}).
\end{equation}
We denote by $\Psi^{s}(\mc{M})$ the space of pseudo-differential operators
of order $s\in \rr$ (see for example \cite[Chap. 7]{Ta2}), i.e. which have Schwartz kernel that can be written in local coordinates as 
\[ K(x,x')= \frac{1}{(2\pi)^3}\int_{\rr^3}e^{i(x-x')\xi}\sigma(x,\xi)d\xi\]
where $\sigma(x,\xi)$ is smooth and satisfies the following symbolic estimates of order $s$
\[ \forall \alpha,\beta \in \nn^3, \exists C_{\alpha,\beta}>0, \quad |\pl_x^{\alpha}\pl_\xi^\beta \sigma(x,\xi)|\leq 
C_{\alpha,\beta}\cjg \xi\cjd^{s-|\beta|}.\]
For $A\in \Psi^s(\mc{M})$, there is a homogeneous symbol $\sigma_p$ on $T^*\mc{M}$ of order $s$, called principal symbol, so that in local coordinates $\sigma-\sigma_p$ is a symbol of order $s-1$ outside $\xi=0$.  We say that $A$ is elliptic in a conic set $W\subset T^*\mc{M}$
if there is $C>0$ such that $|\sigma_p(x,\xi)|>C|\xi|^{s}$ in $W$ for $|\xi|>1$.
The wave-front set of a distribution $u\in \mc{D}'(\mc{M})$ is the closed conic subset ${\rm WF}(u)\subset T^*\mc{M}\setminus\{\xi=0\}$ defined by: $(x_0,\xi_0)\notin {\rm WF}(u)$ if and only if there is $A\in \Psi^0(\mc{M})$  elliptic in a conic open set $W$ containing $(x_0,\xi_0)$ such that $Au\in C^\infty(\mc{M})$.

We also need to define  spaces of pseudo-differential operators with limited smoothness.
Following Taylor \cite{Ta}, for $\gamma\geq 0$ and $\gamma+m\geq 0$, we denote by $C^{\gamma}S^m(\rr^3)$ the class of symbols $\sigma(x,\xi)$ compactly supported in $x$, such that for all $\alpha$ there is $C_\alpha>0$ such that 
\[ ||\pl_\xi^\alpha \sigma(\cdot,\xi)||_{C^{\gamma}}\leq C_\alpha \cjg \xi\cjd^{m-|\alpha|}.\]
We will write $C^{1-}S^m$, resp. $C^{2-}S^m$, for symbols that are in all $C^{\gamma}S^m$ spaces with $\gamma<1$, resp. $\gamma<2$. 
We will always denote by ${\rm Op}$ the left quantization of symbols on $\rr^3$, defined by 
\[ {\rm Op}(\sigma)f(x)=\frac{1}{(2\pi)^3}\int_{\rr^6}e^{i(x-x')\xi} \sigma(x,\xi)f(x')d\xi dx'.\]
A subclass of $C^{\gamma}S^m(\rr^3)$ that will be used is the class of classical symbols, denoted $C^{\gamma}S_{\rm cl}^m(\rr^3)$, defined by the extra condition 
\[\sigma(x,\xi)\sim \sum_{j=0}^\infty \sigma_j(x,\xi),\quad |\xi|\to \infty\]
with $\sigma_j(x,\xi)$ homogeneous of degree $m-j$ in $\xi$. 
\begin{lemm}\label{boundednessPSDO}
If $\gamma\notin \nn$ and $m\in\rr$ so that $\gamma+m>0$,  then for each $\sigma\in C^{\gamma}S_{\rm cl}^m(\rr^3)$ the following operator is bounded
\begin{equation}\label{boundTaylor} 
{\rm Op}(\sigma): C_c^{\gamma+m}(\rr^3)\to C_c^{\gamma}(\rr^3).
\end{equation}
If $\gamma\geq 0$, $m\in\rr$ and $s>\gamma$, the following operator is bounded
\begin{equation}\label{bounddednessHs} 
\begin{gathered}
\gamma\notin \nn_0, \,\, {\rm Op}(\sigma): H^{s+m}(\rr^3)\to H^{\gamma}(\rr^3), \\
\gamma\in\nn_0, \,\, {\rm Op}(\sigma): H^{\gamma+m}(\rr^3)\to H^{\gamma}(\rr^3)
\end{gathered}
\end{equation}
\end{lemm}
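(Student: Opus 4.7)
The result is a classical boundedness statement in the calculus of pseudodifferential operators with limited regularity in $x$, and the plan is to follow the symbol-smoothing strategy of Bony and Taylor \cite{Ta}. The idea is to split
\[ \sigma(x,\xi) = \sigma^{\#}(x,\xi) + \sigma^{b}(x,\xi), \]
where $\sigma^{\#}$ lies in a smooth H\"ormander class $S^{m}_{1,\delta}$ and the remainder $\sigma^{b}$, while only $C^{\gamma}$ in $x$, has its symbol order improved by $\gamma\delta$, so that both pieces can be bounded separately.

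Concretely, fix $\delta \in (0,1)$, let $\phi_\eps$ be a mollifier on $\rr^3$ at scale $\eps$, and pick a dyadic partition of unity $1 = \sum_{j\geq 0}\psi_j(\xi)$ with $\psi_j$ supported in $\{|\xi|\sim 2^{j}\}$. Set
\[ \sigma^{\#}(x,\xi) := \sum_{j\geq 0} \bigl(\phi_{2^{-j\delta}}\ast_x\sigma(\cdot,\xi)\bigr)(x)\,\psi_j(\xi), \qquad \sigma^{b}:=\sigma - \sigma^{\#}. \]
Standard mollification estimates (cf.\ \cite[Chap.~1]{Ta}) yield $\sigma^{\#}\in S^{m}_{1,\delta}(\rr^3)$ and $\sigma^{b} \in C^{\gamma}S^{m-\gamma\delta}_{1,\delta}(\rr^3)$.

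For $\Op(\sigma^{\#})$, the classical H\"ormander calculus provides continuity $H^{s+m}\to H^{s}$ for every $s\in\rr$ and, for $\gamma\notin\nn$, also $C^{\gamma+m}\to C^{\gamma}$, provided $\delta$ is sufficiently small (see \cite[Chap.~7]{Ta2}). For $\Op(\sigma^{b})$, I would insert a Littlewood-Paley decomposition $u=\sum_k\Delta_k u$ with $\Delta_k u$ supported in $\{|\xi|\sim 2^{k}\}$, apply the rough symbol to each frequency-localized piece, and use the gained order $m-\gamma\delta<m$ together with the $C^{\gamma}$ regularity in $x$ to close a dyadic sum in $k$. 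This yields the H\"older bound \eqref{boundTaylor} directly, and the Sobolev bound \eqref{bounddednessHs} follows via the embeddings \eqref{embeddinghol}: for $\gamma\notin\nn_0$ the slack $s>\gamma$ is what allows passing from $C^{\gamma}$-type estimates to an $H^{\gamma}$ output through an intermediate $H^{s}\hookrightarrow H^{\gamma}$ inclusion, while for integer $\gamma$ the embedding $C^{k}\subset H^{k}$ is used directly.

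The main obstacle is the bookkeeping of $\delta$: it must be small enough that $\Op(\sigma^{\#})$ belongs to $\Psi^{m}_{1,\delta}$ with constants behaving essentially like those of a classical operator on both H\"older and Sobolev scales, yet $\gamma\delta$ must be strictly positive in order to close the dyadic summation for $\Op(\sigma^{b})$. The integer-$\gamma$ endpoint of the Sobolev statement is also tight, since the H\"older-Sobolev embedding $C^{\gamma}\hookrightarrow H^{s}$ for $s<\gamma$ degenerates with no slack in that case, which is precisely why the input regularity cannot be relaxed below $H^{\gamma+m}$.
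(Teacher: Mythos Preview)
Your symbol-smoothing route is a valid alternative for the H\"older bound \eqref{boundTaylor}, and indeed this is essentially Taylor's argument. But the passage to the Sobolev bound \eqref{bounddednessHs} has a gap: you claim it ``follows via the embeddings \eqref{embeddinghol},'' yet those embeddings read $C^\gamma\subset H^s$ for $s<\gamma$, which is the wrong direction. To feed an $H^{s+m}$ input into your H\"older estimate you would need $H^{s+m}\subset C_c^{\gamma+m}$, and that fails. The inclusion $H^s\hookrightarrow H^\gamma$ for $s>\gamma$ only helps once you already know $\Op(\sigma):H^{s+m}\to H^s$, which is precisely what must be proved. Symbol smoothing does yield Sobolev bounds, but through a direct Sobolev argument for $\Op(\sigma^b)$ (Taylor proves $\Op(a):H^{s+m}\to H^s$ for $a\in C^r S^m_{1,\delta}$ and $0\le s<r$), not via a detour through H\"older spaces.

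The paper's proof is quite different and exploits the \emph{classical} structure of the symbol. After stripping off a remainder $\sigma_r\in C^\gamma S^{m-N}$ with $N$ large (hence with $C^\gamma$ Schwartz kernel), each homogeneous piece $\sigma_j(x,\xi)$ is expanded in spherical harmonics $\omega_\ell(\xi/|\xi|)$, so that
\[
\Op(\sigma_j)=\sum_{\ell\ge 0} p_{j\ell}(x)\,\Op\bigl(|\xi|^{m-j}\omega_\ell(\xi/|\xi|)\bigr),
\]
with $p_{j\ell}\in C^\gamma$ decaying faster than any polynomial in $\ell$. Each summand is thus a multiplication by a $C^\gamma$ function composed with a smooth Fourier multiplier; the multipliers map $H^{s+m-j}\to H^s$ with norm $\mc{O}(1)$ and $C_c^{\gamma+m}\to C^\gamma$ with polynomially growing norm, and the rapid decay of $\|p_{j\ell}\|_{C^\gamma}$ closes the sum. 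This argument is shorter and avoids the $S^m_{1,\delta}$ machinery entirely, but is specific to classical symbols; your approach would in principle cover the non-classical case as well, once the Sobolev step is argued directly rather than through embeddings.
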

\begin{proof}
The bound \eqref{boundTaylor} is Proposition 1.A in \cite{Ta}. 
The proof of Proposition 1.1 in \cite{Ta} reduces to the case of a homogeneous 
symbol $\sigma_0$ of degree $0$ in $\xi$. Indeed,  one can write $\sigma=\sigma_r+
\sum_{j=1}^N\sigma_{j}$ for some $N\in\nn$ where $\sigma_r\in C^{\gamma}S^{m-N}(\rr^3)$ and $\sigma_j$ are homogeneous symbols of degree $m-j$ in $\xi$ and $C^{\gamma}$ in $x$.
The operator $\sigma_r$ has Schwartz kernel in $C^{\gamma}(\rr^3\x\rr^3)$ if $N$ is large enough and thus  the good boundedness properties. For the homogeneous symbol $\sigma_j$, one writes it as a converging sum 
\[ \sigma_j(x,\xi)=\sum_{\ell=0}^\infty p_{j\ell}(x){\rm Op}(|\xi|^{m-j}\omega_\ell(\xi/|\xi|))\]
where $\omega_\ell$ are the spherical harmonics on $S^2$. The $p_{j\ell}$ functions decay faster than any polynomials in $\ell$ in $C^{\gamma}$ norm, and ${\rm Op}(|\xi|^{m-j}\omega_\ell(\xi/|\xi|))$ maps $C_c^{\gamma+m}(\rr^3)$ to $C^\gamma(\rr^3)$ with norm growing polynomially in $\ell$. The same argument then shows \eqref{bounddednessHs} since ${\rm Op}(|\xi|^{m-j}\omega_\ell(\xi/|\xi|))$ maps $H^{s+m-j}(\rr^3)$ to $H^s(\rr^3)$ with norm $\mc{O}(1)$.
\end{proof}

\subsection{Discrete spectrum in Sobolev anisotropic spaces}

We recall the results of Butterley-Liverani \cite{BuLi} and Faure-Sj\"ostrand \cite{FaSj}.
\begin{prop}[Faure-Sj\"ostrand]\label{fauresjos}
Let $X$ be a smooth vector field generating an Anosov flow on a compact manifold $\mc{M}$, let $V\in C^\infty(\mc{M})$ and let $P=-X+V$ be the associated first-order differential operator.\\
1) There exists $C_0\geq 0$ such that the resolvent $R_{P}(\la):=(P-\la)^{-1}:L^2(\mc{M})\to L^2(\mc{M})$ of $P$ is defined for ${\rm Re}(\la)>C_0$ and extends meromorphically to $\la\in \cc$ as a family of bounded operators 
$R_P(\la): C^\infty(\mc{M})\to \mc{D}'(M)$. The poles are called Ruelle resonances, the operator $\Pi_{\la_0}:=-{\rm Res}_{\la_0}R_P(\la)$ at a pole $\la_0$ is a finite rank projector and there exists $p\geq 1$ such that  $(P-\la_0)^p\Pi_{\la_0}=0$. The distributions in ${\rm Ran}\, \Pi_{\la_0}$ are called generalized resonant states and those in ${\rm Ran}\, \Pi_{\la_0}\cap \ker(P-\la_0)$ are called resonant states.\\ 
2) For each $N\in [0,\infty)$, there exists a Sobolev space $\mc{H}^{N}$
so that $C^\infty(\mc{M})\subset \mc{H}^{N}\subset H^{-N}(\mc{M})$ and such that $R_P(\la):\mc{H}^N\to \mc{H}^N$ is a meromorphic family of bounded operators 
in ${\rm Re}(\la)>C_0-N\mu_{\min}$, and $(P-\la):{\rm Dom}(P)\cap \mc{H}^{N}\to \mc{H}^N$ is an analytic family of Fredholm operators\footnote{Here ${\rm Dom}(P):=\{u\in\mc{H}^N; Pu\in \mc{H^N}\}$ is the domain of $P$ equipped with the graph norm.} in that region with inverse given by $R_P(\la)$.\\
3) For each $N_0>0$ large enough (depending on $N$) and each conic neighborhood $W$ of $E_u^*$, $\mc{H}^N$ can be chosen in such a way that $H^{N_0}(\mc{M})\subset \mc{H}^N$, and for each $A\in \Psi^0(\mc{M})$ microsupported outside $W$, one has $Au\in H^{N_0}(\mc{M})$ for all $u\in\mc{H}^N$.  For a resonance $\la_0$, the wave-front set of each generalized resonant state $u\in {\rm Ran}(\Pi_{\la_0})$ is contained in $E_u^*$. 
\end{prop}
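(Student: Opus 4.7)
The plan is to build an anisotropic Sobolev space adapted to the hyperbolic structure of the flow, following the strategy of Faure--Sj\"ostrand. Since $V$ is smooth and $X$ preserves $dm$, the operator $P=-X+V$ is a bounded perturbation of a skew-adjoint operator on $L^2(\mc{M},dm)$; so by Hille--Yosida, $R_P(\la)=(P-\la)^{-1}$ exists as a bounded operator on $L^2$ for ${\rm Re}(\la)>C_0:=\|V\|_\infty$. The difficulty is to continue this resolvent meromorphically past the imaginary axis, which requires replacing $L^2$ by a Hilbert space where $P-\la$ becomes Fredholm.

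First I would construct a smooth order function $m: T^*\mc{M}\setminus 0\to [-1,1]$, homogeneous of degree $0$, equal to $-1$ on a small conic neighborhood of $E_u^*$ and $+1$ on a small conic neighborhood of $E_s^*$, and strictly decreasing along the Hamilton flow of $p(x,\xi):=\xi(X(x))$ outside arbitrarily small conic neighborhoods of $E_u^*\cup E_s^*$. Such $m$ exists because the symplectic lift of $\varphi_t$ dilates $E_u^*$ (source) and contracts $E_s^*$ (sink), a classical Lyapunov-type construction in this setup. Quantize $\langle\xi\rangle^{Nm(x,\xi/|\xi|)}$ as an elliptic pseudo-differential operator $A_N$ of variable order, and set $\mc{H}^N:=A_N^{-1}L^2(\mc{M})$ with norm $\|u\|_{\mc{H}^N}:=\|A_N u\|_{L^2}$. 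By construction $\mc{H}^N$ behaves like $H^{-N}$ microlocally near $E_u^*$ and like $H^{+N}$ elsewhere, so $C^\infty(\mc{M})\subset\mc{H}^N\subset H^{-N}(\mc{M})$; moreover any $A\in\Psi^0(\mc{M})$ microsupported outside a conic neighborhood $W$ of $E_u^*$ maps $\mc{H}^N$ into $H^{N_0}(\mc{M})$ for $N_0=N_0(N,W)$ as large as one wants by shrinking $W$, which is the microlocal content of 3).

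Next, conjugation: $P^{(N)}:=A_N P A_N^{-1}$ is a first-order pseudo-differential operator whose subprincipal imaginary part contains the term $-N(H_pm)\log\langle\xi\rangle$, non-positive everywhere and strictly negative outside neighborhoods of $E_u^*\cup E_s^*$. Combining a sharp G\aa{}rding inequality in the hyperbolic region with radial-point estimates of Melrose--Vasy--Dyatlov--Zworski type at the source $E_u^*$ and the sink $E_s^*$, one would obtain an \emph{a priori} estimate
\begin{equation*}
\|u\|_{L^2}\le C\bigl(\|(P^{(N)}-\la)u\|_{L^2}+\|\chi u\|_{H^{-M}}\bigr),\qquad \chi\in C^\infty_c,\ M\gg 1,
\end{equation*}
valid for ${\rm Re}(\la)>C_0-N\mu_{\min}$. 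Compactness of $L^2\hookrightarrow H^{-M}$ on the compact manifold $\mc{M}$ then makes $P-\la$ Fredholm of index $0$ on $\mc{H}^N$ in that half-plane, and analytic Fredholm theory extends $R_P(\la)$ meromorphically with finite-rank residue projector $\Pi_{\la_0}$ satisfying $(P-\la_0)^p\Pi_{\la_0}=0$ for some $p\ge 1$. Independence of the resonances and resonant states on $N$ follows from 3): the wave-front set of any generalized resonant state $u$ is contained in the set where $m$ fails to be strictly negative, namely $E_u^*$, so near $E_u^*$ the distribution $u$ is only of regularity $H^{-N}$ but is smooth everywhere else, hence also lies in $\mc{H}^{N'}$ for every $N'\ge N$.

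The main obstacle is the radial-point analysis at the source $E_u^*$ and sink $E_s^*$: one must verify that the threshold on ${\rm Re}(\la)$ is set exactly by the minimal expansion rate $\mu_{\min}$ from \eqref{numin-intro}, which requires computing the transverse linearization of the symplectic lift of $\varphi_t$ along $E_u^*\cup E_s^*$ and matching it with the weight $Nm$. The remaining ingredients --- propagation of singularities in the hyperbolic region, ellipticity estimates away from the characteristic set, and analytic Fredholm continuation --- are by now standard microlocal machinery.
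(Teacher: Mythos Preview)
The paper does not give its own proof of this proposition: it is stated as a recall of the results of Faure--Sj\"ostrand \cite{FaSj} (and Butterley--Liverani \cite{BuLi}), with only the one-line remark following the statement that the argument of \cite{FaSj}, written for $V=0$, carries over unchanged when a smooth potential $V$ is added. Your sketch is a faithful outline of precisely that construction --- the variable-order escape function $m$, the conjugated operator $A_NPA_N^{-1}$, and analytic Fredholm theory --- phrased partly in the later Dyatlov--Zworski radial-point language rather than the original semiclassical escape-function estimates of \cite{FaSj}; the two formulations are well known to be equivalent, and the paper itself invokes \cite[Theorem~E.56 and Proposition~E.53]{DyZw} in the very next lemma, so your choice of language is consistent with how the authors themselves use the result.
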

The space $\mc{H}^{N}$ is called an anisotropic Sobolev space. The statement in \cite{FaSj} is only for the case with no potential (i.e $V=0$), but their proof applies as well to the case $P=-X+V$ as long as $V\in C^{\infty}(\mc{M})$.
It also follows readily from the proof of \cite{FaSj} that, if the flow of $X$ perserves a smooth measure $dm$ and $V=0$, then one can take $C_0=0$. For a general potential and a flow preserving a smooth measure, we can give an estimate on $C_0$: let us define the quantity
\[ %V_{\min}:=\lim_{t\to -\infty}\inf_{z\in\mc{M}} \frac{1}{|t|}\int_{t}^0V(\varphi_s(z))ds, \quad 
V_{\max}:=\lim_{t\to -\infty}\sup_{z\in \mc{M}} \frac{1}{|t|}\int_{t}^0V(\varphi_s(z))ds.\]
\begin{lemm}
Let $V\in C^\infty(\mc{M})$ and assume $X$ is a smooth vector field generating an Anosov
flow preserving a smooth measure $dm$. The resolvent $R_P(\la)$ of Proposition \ref{fauresjos} is analytic in $\la$ as an $L^2(\mc{M})$ bounded operator in 
${\rm Re}(\la)>V_{\max}$. For each $N>0$,  $R_P(\la) : \mc{H}^N\to H^{-N}(\mc{M})$ is a meromorphic  family of bounded  operators in the region ${\rm Re}(\la)>V_{\max}-N\mu_{\min}$.
\end{lemm}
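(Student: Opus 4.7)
The plan is to first establish an $L^2$ semigroup bound via the explicit formula for the propagator $e^{tP}$, combine it with the Laplace representation of the resolvent to obtain the $L^2$ analyticity statement, and then invoke Proposition \ref{fauresjos} with $C_0 = V_{\max}$ to deduce the anisotropic Sobolev statement.

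For the $L^2$ bound, I would solve the transport equation $\pl_t u + Xu = Vu$ with $u|_{t=0}=f$ by characteristics along $\dot z = X(z)$, giving the explicit formula
\[ e^{tP}f(z) = f(\varphi_{-t}(z))\exp\Bigl(\int_{-t}^{0} V(\varphi_s(z))\,ds\Bigr), \qquad t\geq 0.\]
Since $\varphi_t$ preserves $dm$, the substitution $z=\varphi_t(w)$ yields
\[ \|e^{tP}f\|_{L^2}^2 = \int_{\mc{M}} |f(w)|^2 \exp\Bigl(2\int_0^t V(\varphi_s(w))\,ds\Bigr)\,dm(w) \leq e^{2F(t)} \|f\|_{L^2}^2,\]
where $F(t):=\sup_{w\in\mc{M}}\int_0^t V(\varphi_s(w))\,ds$. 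The function $F$ is subadditive by the usual cocycle argument (splitting the integral at time $s$ and applying $\sup_w$ to each piece separately), so Fekete's lemma gives the existence of $\lim_{t\to\infty} F(t)/t$, and the change of variables $s\mapsto s-\tau$, $w=\varphi_{-\tau}(z)$ (identical to the one used in Section 2 to relate the two equivalent expressions for $\mu_{\min/\max}$) identifies this limit with $V_{\max}$. Hence, for each $\eps>0$ there is $C_\eps>0$ with $\|e^{tP}\|_{L^2\to L^2}\leq C_\eps\, e^{(V_{\max}+\eps)t}$ for all $t\geq 0$. The Laplace representation $R_P(\la)f = -\int_0^\infty e^{-t\la}\,e^{tP}f\,dt$ then converges in operator norm and is analytic in $\{\Re(\la)>V_{\max}+\eps\}$; letting $\eps\to 0^+$ yields $L^2$-analyticity in $\{\Re(\la)>V_{\max}\}$.

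For the anisotropic statement, I would apply Proposition \ref{fauresjos} to $P=-X+V$. The Faure--Sj\"ostrand construction produces $\mc{H}^N$ on which $P-\la$ is Fredholm, with meromorphic inverse, in $\{\Re(\la)>C_0-N\mu_{\min}\}$, where $C_0$ is any constant governing the $L^2$-semigroup growth of $e^{tP}$. The previous step shows that any $C_0>V_{\max}$ is admissible; concretely, conjugating by the escape-function weight $e^{G_N}$ of order $N$ (chosen so that $\{-p,G_N\}\leq -N\mu_{\min}+\eps$ on a conic neighborhood of $E_u^*$, where $p$ is the principal symbol of $P$) produces a shifted operator whose $L^2$-semigroup is bounded by $e^{(V_{\max}-N\mu_{\min}+2\eps)t}$ modulo a compact perturbation. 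Taking $\eps\to 0^+$ gives the Fredholm/meromorphic region $\{\Re(\la)>V_{\max}-N\mu_{\min}\}$, and composing with the inclusion $\mc{H}^N\hookrightarrow H^{-N}(\mc{M})$ from part 2 of Proposition \ref{fauresjos} yields the claimed mapping property.

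The main technical obstacle is this last step: one has to verify that the Faure--Sj\"ostrand argument can indeed be run with the sharp growth rate $V_{\max}$ rather than the cruder bound $\|V\|_{L^\infty}$. Concretely, this requires checking that $e^{-G_N}\,P\,e^{G_N}$ differs from $-X+V-N\mu_{\min}$ by a controlled lower-order/compact term whose contribution does not spoil the sharp exponential bound established in the first step. This bookkeeping is routine but essential; once it is in place, the full lemma follows.
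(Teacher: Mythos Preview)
Your treatment of the first statement (the $L^2$ analyticity in $\{\Re(\la)>V_{\max}\}$) is correct and essentially identical to the paper's: both write the resolvent as a time integral of the propagator $f\mapsto e^{\int_t^0 V\circ\varphi_s\,ds}\,f\circ\varphi_t$, use measure preservation, and invoke subadditivity/Fekete to identify the exponential rate with $V_{\max}$.

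For the second statement your route diverges from the paper's, and there is a real gap. You propose to feed the improved constant $C_0=V_{\max}$ back into the Faure--Sj\"ostrand machinery, arguing that the conjugated operator $e^{-G_N}Pe^{G_N}$ generates an $L^2$-semigroup bounded by $e^{(V_{\max}-N\mu_{\min}+2\eps)t}$ modulo compact. The difficulty is that $e^{-G_N}Pe^{G_N}$ is a genuine pseudodifferential operator, not a transport operator of the form $-X+W$ with $W\in C^0(\mc{M})$, so the characteristics/Fekete argument you used in the first step no longer applies to it. What the escape-function construction actually controls is the \emph{pointwise} supremum of the real part of the total symbol, which for the zeroth-order contribution gives $\sup_{\mc{M}}V$, not the Birkhoff average $V_{\max}$. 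Upgrading $\sup V$ to $V_{\max}$ inside the Faure--Sj\"ostrand framework requires an additional idea (for instance replacing $V$ by a cohomologous potential $V-X\psi$ with $\sup(V-X\psi)$ close to $V_{\max}$, or building the long-time average of $V$ into the escape function itself). This is not ``routine bookkeeping''; it is exactly the content you are trying to prove.

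The paper sidesteps this entirely. It does \emph{not} reopen the construction of $\mc{H}^N$; instead it takes $R_P(\la)f$ (already known to exist in a possibly larger $\mc{H}^{N'}$) and applies the radial sink estimate of Dyatlov--Zworski at $E_u^*$. The threshold regularity in that estimate is governed by the long-time sign of
\[
\int_0^t V\circ\varphi_s\,ds \;-\; t\,\Re(\la)\;-\;N\log\big\|d\varphi_t|_{E_u^*}\big\|,
\]
which is negative for large $t$ precisely when $\Re(\la)>V_{\max}-N\mu_{\min}$. Thus the time-averaged quantity $V_{\max}$ emerges automatically from the radial-point threshold, with no need to sharpen the escape function. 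If you want to complete your approach, you should either carry out the cohomological replacement of $V$ explicitly, or switch to the radial-estimate argument as the paper does.
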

\begin{proof}
The resolvent of $P=-X+V$ for ${\rm Re}(\la)\gg 1$ large enough is given by the expression 
\[ R_P(\la)f= -\int_{-\infty}^0 e^{\la t+\int_{t}^0V\circ \varphi_s\, ds}f\circ \varphi_t \, dt.\]
We see that it converges in $L^2(\mc{M},dm)$ in the region $\{{\rm Re}(\la)>V_{\max}\}$ by using first the estimate $||f\circ \varphi_t||_{L^2(dm)}=||f||_{L^2(dm)}$ and the pointwise bounds (following from Cauchy-Schwarz) 
\[|R_P(\la)f(z)|^2\leq C_{\la,\eps} \int_{-\infty}^0 e^{{\rm Re}(\la) t-t(V_{\max}+\eps)}|f(\varphi_t(z))|^2 \, dt\]
for some constant $C_{\la,\eps}$ depending on ${\rm Re}(\la), \eps>0$ 
and $\eps>0$ that can be chosen as small as we want. The second statement is a consequence of the radial point estimates proved in Dyatlov-Zworski \cite[Theorem E.56]{DyZw}: indeed, since for $f\in \mc{H}^N$  we know that $BR_P(\la)f\in H^{N_0}(\mc{M})$ for some large $N_0$
and $B\in \Psi^{0}(\mc{M})$ elliptic outside a small conic neighborhood of $E_u^*$, we can use 
\cite[Proposition E.53]{DyZw} and the fact that 
\[ \int_0^t V\circ \varphi_s\, ds -t{\rm Re}(\la)-N \log||d\varphi_t|_{E_u^*}||<0\]
for large $t>0$ to deduce that $R_P(\la)f\in H^{-N}(\mc{M})$.
\end{proof}

In \cite{BuLi}, Butterley-Liverani deal with non-smooth flows. Even though it is not explicitely written in their paper, their technique allows to deal with potentials $V\in C^{1+q}(\mc{M})$, $q\in(0,1)$. In fact, the analysis with potentials is done carefully by Gou\"ezel-Liverani \cite{GoLi} for  Anosov diffeomorphisms using the same technique. Combining the methods of \cite[Theorem 1]{BuLi} for flows with the arguments of \cite[Proposition 4.4. and Theorem 6.4.]{GoLi} (taking $p=1,q<1$ and $\iota=0$ in their notations, since our flow is $C^\infty(\mc{M})\subset C^{p+q+1}(\mc{M})$), one obtains:
\begin{prop}[Butterley-Liverani, Gou\"ezel-Liverani]\label{BGL}
Let $V\in C^{1+q}(\mc{M})$ for some $0<q<1$ and let $X$ be a smooth vector field generating an Anosov flow preserving a smooth measure $dm$ in dimension $3$. 
There exist a Banach space $\mc{B}_{1,q}$ satisfying: for each $q'>q$ one has $C^1(\mc{M})\subset \mc{B}_{1,q}\subset (C^{q'}(\mc{M}))'$, the operator $P=-X+V$ has discrete spectrum in the region ${\rm Re}(\la)>\rho- q\mu_{\min}$ and the resolvent 
$R_P(\la)=(P-\la)^{-1}:\mc{B}_{1,q}\to \mc{B}_{1,q}$ is meromorphic there. Here $\rho:={\rm Pr}(V-r_-)$ is the topological pressure of the potential $V-r_-$ and $r_-$ is the function of Lemma \ref{existunstable}.
\end{prop}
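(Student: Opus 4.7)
The plan is to combine the Butterley-Liverani construction of anisotropic Banach spaces for Anosov flows with the Gou\"ezel-Liverani bookkeeping for non-smooth potentials, taking their parameters $p=1$, $q<1$, $\iota=0$.

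First, I would construct the space $\mc{B}_{1,q}$ as an anisotropic Banach space of distributions whose norm encodes regularity $+1$ transverse to the stable foliation and regularity $-q$ along it. Concretely, as in \cite[\S 3]{GoLi}, one fixes a finite family of admissible local unstable discs $\{W_\alpha\}$ covering $\mc{M}$, and completes $C^1(\mc{M})$ under a norm of the form
\[ \|f\|_{\mc{B}_{1,q}}=\sup_{\alpha}\sup_{\phi\in C^{q'}_c(W_\alpha),\,\|\phi\|_{C^{q'}}\leq 1}\Big|\int_{W_\alpha} \nabla^{u}f\cdot \phi\,d{\rm vol}_{W_\alpha}\Big|+\|f\|_{\text{weak}}\]
where $\nabla^u$ is a derivative along an unstable leaf (integrated by parts against $\phi$ if $f$ is a true distribution) and $\|\cdot\|_{\text{weak}}$ is an auxiliary $(C^{q'})'$--norm providing compactness. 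Checking the embeddings is routine: a $C^1$ function tested against a unit $C^{q'}$ function on an unstable disc is bounded, and conversely the definition of the norm immediately controls the pairing with any $C^{q'}$ test function on $\mc{M}$ via a partition of unity and Fubini along the stable/flow directions.

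Second, I would establish a Lasota-Yorke inequality for the transfer operator $\mc{L}_tf(z)=e^{\int_{-t}^0V(\varphi_s(z))ds}f(\varphi_{-t}(z))$, which is the $C^0$-semigroup generated by $-P$. Pulling back the test function $\phi\in C^{q'}(W_\alpha)$ by $\varphi_t$ distributes it over preimage unstable discs, on each of which its $C^{q'}$ norm is multiplied by at most $(\sup_z |d\varphi_t|_{E_u(z)}|)^{-q}$, giving an improvement of factor $e^{-t(q\mu_{\min}-\eps)}$ by \eqref{boundwithmu}. Combined with the change of variable on the unstable disc (producing the Jacobian factor $e^{-\int_0^tr_-\circ\varphi_{-s}ds}$) and the weight $e^{\int_{-t}^0 V\circ\varphi_s\,ds}$, this yields, for every $\eps>0$ and every $t$ large,
\[\|\mc{L}_tf\|_{\mc{B}_{1,q}}\leq C_\eps e^{t(\rho-q\mu_{\min}+\eps)}\|f\|_{\mc{B}_{1,q}}+C_{t,\eps}\|f\|_{\text{weak}},\]
where the exponent comes from bounding the Birkhoff sums of $V-r_-$ by the topological pressure $\rho={\rm Pr}(V-r_-)$ via the variational principle (exactly as in \cite[Prop.~4.4]{GoLi}). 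The $C^{1+q}$ regularity of $V$ is used precisely here, to ensure that the potential factor is $C^{q'}$ along unstable leaves so that the contraction argument goes through.

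Third, Hennion's theorem applied to the compact embedding $\mc{B}_{1,q}\hookrightarrow (C^{q'}(\mc{M}))'$ upgrades this Lasota-Yorke bound to an essential spectral radius bound $r_{\rm ess}(\mc{L}_t)\leq e^{t(\rho-q\mu_{\min})}$ on $\mc{B}_{1,q}$. Transferring from the semigroup to the generator along the lines of \cite[Thm.~1]{BuLi} (i.e.\ using the identity $R_P(\la)=\int_0^\infty e^{-\la t}\mc{L}_tdt$ when $\Re(\la)$ is large and meromorphic continuation via the resolvent Lasota-Yorke inequality), one obtains that $P-\la:\mathrm{Dom}(P)\cap\mc{B}_{1,q}\to\mc{B}_{1,q}$ is Fredholm of index $0$ with meromorphic inverse in the half-plane $\{\Re(\la)>\rho-q\mu_{\min}\}$.

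The main technical obstacle is establishing the Lasota-Yorke inequality with the sharp pressure constant $\rho={\rm Pr}(V-r_-)$ rather than a crude supremum bound on $V-r_-$. This is exactly the point where Gou\"ezel-Liverani's careful distortion argument, chopping orbits into pieces where Bowen's property holds and summing the weights using the variational principle, must be imported into the flow context. The only extra care needed relative to \cite{GoLi} is the fact that the flow direction is neutral, which one handles as in \cite{BuLi} by integrating along short flow segments when defining the norm and by working with admissible unstable discs that are transverse to $X$; no new idea is required beyond combining the two references with the stated parameters.
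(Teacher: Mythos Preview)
The paper does not give its own proof of this proposition; it simply quotes it as the result of combining \cite[Theorem~1]{BuLi} for flows with \cite[Proposition~4.4 and Theorem~6.4]{GoLi} for potentials, taking the parameters $p=1$, $q<1$, $\iota=0$, which is precisely the combination your sketch carries out. One slip worth correcting: in the Gou\"ezel--Liverani/Butterley--Liverani norm the admissible leaves $W_\alpha$ lie in the \emph{stable} direction (so that pulling a $C^{q}$ test function back by $\varphi_t$ contracts its H\"older seminorm and produces the gain $e^{-tq\mu_{\min}}$), not the unstable direction as you wrote; with unstable discs the contraction step in your Lasota--Yorke inequality would go the wrong way.
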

We notice that $\rho={\rm Pr}(V-r_-)\leq V_{\max}$ by using ${\rm Pr}(-r_-)=0$.

\subsection{Horocyclic invariance of resonant states for contact flows. Short proof}
In this section, we shall assume that $\mc{M}$ is a 3-dimensional oriented compact manifold and $X$ is a smooth vector field generating a contact Anosov flow, with oriented unstable bundle. Here $dm$ will denote the contact measure and $V\in C^1(\mc{M})$ a potential. Due to the $C^{2-}$ regularity of $U_-$,  for $u\in H^{-1+\eps}(\mc{M})$ we can define $\omega=U_-u$ as a distribution by the expression
\[ \forall f\in C^\infty(\mc{M}),\quad  \cjg U_-u,f\cjd:=\cjg u,-(U_-f+{\rm div}(U_-)f)\cjd;\]
here ${\rm div}$ denotes the divergence with respect to $dm$ and 
 $-U_--{\rm div}(U_-)$ is the adjoint to $U_-$ with respect to $dm$. 
The quantity ${\rm div}(U_-)$ is in $C^{1-}(\mc{M})$, thus if $u$ is a resonant state, $U_-u$ is well-defined as long as ${\rm Re}(\la)>-\mu_{\min}$ since $u\in \mc{H}^{1-\eps}\subset H^{-1+\eps}(\mc{M})$ for some $\eps>0$ in that case.

We define the transfer operator
\[ \mc{L}^t: C^\infty(\mc{M})\to C^\infty(\mc{M}), \quad (\mc{L}^tf)(x):=f(\varphi_t(x)).\]
It extends as a  bounded operator on $L^2(\mc{M},dm)$ with norm $||\mc{L}^t||_{L^2\to L^2}=1$. If $V\in C^1(\mc{M})$ and $P=-X+V$, we also define the operator 
\[ e^{-tP}: C^1(\mc{M})\to C^1(\mc{M}), \quad e^{-tP}f:=e^{-\int_0^t\mc{L}^sVds}\mc{L}^tf
\]
satisfying $\pl_t(e^{-tP}f)=-Pe^{-tP}f$. Let us first prove an easy Lemma. 
\begin{lemm}\label{normL^s}
For each $\eps>0$, there exists $C_\eps>0$ such that for each $s\in [-1,1]$ and each $t\in \rr$, 
the operator $\mc{L}^t$ is bounded on $C^1(\mc{M})$ with norm 
\begin{equation}\label{boundC1}
||\mc{L}^t||_{C^{1}\to C^{1}}\leq  C_\eps e^{(\mu_{\max}+\eps) |t|}
\end{equation}
and on $H^{s}(\mc{M})$ with norm
\begin{equation}\label{boundHalpha} 
 ||\mc{L}^t||_{H^{s}\to H^{s}}\leq C_\eps e^{|s|(\mu_{\max}+\eps) |t|}.
 \end{equation}
\end{lemm}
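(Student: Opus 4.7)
The plan is to reduce both bounds to a pointwise estimate on the norm of $d\varphi_t$ and then use the flow-invariance of $dm$, treating the $H^s$ bound for general $s\in[-1,1]$ by interpolation and duality.

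First I would establish the pointwise bound $\|d\varphi_t(z)\|_G \leq C_\eps e^{(\mu_{\max}+\eps)|t|}$ for all $z\in\mc{M}$ and $t\in\rr$. Writing $d\varphi_t$ in the splitting $T\mc{M}=\rr X\oplus E_s\oplus E_u$ and combining \eqref{estimeehyp-intro} with \eqref{boundwithmu} (and the analogous statements for negative $t$ obtained by swapping the roles of $E_s$ and $E_u$), one sees that every component of $d\varphi_t$ is bounded by $C_\eps e^{(\mu_{\max}+\eps)|t|}$: the factor along $\rr X$ is trivial, the contracting direction is bounded by $1$, and the expanding direction gives exactly the claimed exponential. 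The continuity of the splitting and compactness of $\mc{M}$ absorb the loss coming from measuring in the smooth metric $G$ rather than an adapted one.

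The $C^1$ bound then follows immediately from the chain rule: $\|\mc{L}^tf\|_{C^0}=\|f\|_{C^0}$, while $d(\mc{L}^tf)_z = df_{\varphi_t(z)}\circ d\varphi_t(z)$, so $\|d(\mc{L}^tf)\|_{C^0}\leq C_\eps e^{(\mu_{\max}+\eps)|t|}\|df\|_{C^0}$, giving \eqref{boundC1}.

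For \eqref{boundHalpha} I would treat three cases. For $s=0$ the invariance of $dm$ under $\varphi_t$ gives $\|\mc{L}^tf\|_{L^2}=\|f\|_{L^2}$. For $s=1$, one applies the pointwise bound on $d\varphi_t$ inside the integral
\[
\|d(\mc{L}^tf)\|_{L^2}^2 = \int_{\mc{M}} |df_{\varphi_t(z)}\circ d\varphi_t(z)|_G^2\, dm(z) \leq C_\eps^2 e^{2(\mu_{\max}+\eps)|t|}\int_{\mc{M}}|df_{\varphi_t(z)}|_G^2\,dm(z),
\]
and then a change of variables using $(\varphi_t)_*dm=dm$ reduces the last integral to $\|df\|_{L^2}^2$. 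Together with the $L^2$ bound this yields $\|\mc{L}^tf\|_{H^1}\leq C_\eps e^{(\mu_{\max}+\eps)|t|}\|f\|_{H^1}$. For $s=-1$ I would use duality: since $(\mc{L}^t)^*=\mc{L}^{-t}$ on $L^2(\mc{M},dm)$, the $H^{-1}\to H^{-1}$ norm equals the $H^1\to H^1$ norm of $\mc{L}^{-t}$, which is again $C_\eps e^{(\mu_{\max}+\eps)|t|}$ by the previous step.

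Finally, for general $s\in[-1,1]$, I would invoke complex interpolation of the analytic family $\mc{L}^t$ between $L^2$ and $H^{\pm 1}$ (equivalently, view $(1-\Delta_G)^{s/2}\mc{L}^t(1-\Delta_G)^{-s/2}$ as analytic in $s$ and apply Stein's interpolation theorem). This produces the bound $\|\mc{L}^t\|_{H^s\to H^s}\leq C_\eps e^{|s|(\mu_{\max}+\eps)|t|}$. The main (minor) technical point is obtaining the uniform constants $C_\eps$ in the pointwise estimate on $d\varphi_t$; everything else is a direct consequence of the chain rule, preservation of $dm$, duality, and interpolation.
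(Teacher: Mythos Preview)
Your proposal is correct and follows essentially the same approach as the paper: pointwise control of $d\varphi_t$ via the hyperbolic splitting gives the $C^1$ and $H^1$ bounds by the chain rule and the invariance of $dm$, and then interpolation with the $L^2$ isometry together with the duality $(\mc{L}^t)^*=\mc{L}^{-t}$ handles general $s\in[-1,1]$. The only cosmetic difference is the order in which you apply duality and interpolation, and your invocation of Stein interpolation is more than needed (ordinary complex interpolation of a fixed operator between $L^2$ and $H^{\pm 1}$ suffices).
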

\begin{proof} The $C^1$ bound follows from the definition of $\mu_{\max}$. 
We have $||\mc{L}^t||_{L^2\to L^2}=1$ and for each $\eps>0$, there is $C_\eps>0$ 
such that for all $u\in C^\infty(\mc{M})$ and $x\in \mc{M}$
\[ |d\mc{L}^tu|_{G_x}=|du_{\varphi_t(x)}.d\varphi_t(x)|_{G_x}\leq C_\eps e^{(\mu_{\max}+\eps) |t|}|du(\varphi_t(x))|_{G_{\varphi_t(x)}}
\]
thus by integrating the square of this inequality on $\mc{M}$ and using that $\varphi_t$ preserves $dm$, we get $||d\mc{L}^tu||_{L^2}\leq C_\eps e^{(\mu_{\max}+\eps) |t|}||du||_{L^2}$ and 
$||\mc{L}^t||_{H^1\to H^1}\leq C_\eps e^{(\mu_{\max}+\eps) |t|}$. Interpolating between $H^1$ and $L^2$ we get the result for $s\geq 0$ and using that $(\mc{L}^t)^*=\mc{L}^{-t}$ we obtain the desired result for $s \leq 0$.
\end{proof}
As a direct corollary, we get 
\begin{corr}\label{cor1}
If ${\rm Re}(\la)>\mu_{\max}+V_{\max}$, the resolvent $R_P(\la)$ of $P=-X+V$ is bounded as a map 
\[ R_P(\la): C^1(\mc{M})\to C^1(\mc{M}).\]
\end{corr}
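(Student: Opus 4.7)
The plan is to start from the explicit integral representation of the resolvent that appeared in the proof of the previous lemma,
\[
R_P(\la)f = -\int_{-\infty}^{0} e^{\la t}\, e^{\int_{t}^{0} V\circ\varphi_s\, ds}\, \mc{L}^t f \, dt,
\]
which holds a priori for ${\rm Re}(\la)$ sufficiently large, and to show that the integral already converges absolutely in the Banach space $C^1(\mc{M})$ in the full region ${\rm Re}(\la) > \mu_{\max}+V_{\max}$. The two inputs needed are the $C^1$-bound on the transfer operator supplied by \eqref{boundC1}, and a $C^1$-bound on the weight $M_t(z):=\exp\bigl(\int_{t}^{0} V\circ\varphi_s(z)\, ds\bigr)$.

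First I would fix $\eps>0$. The $C^0$ bound on $M_t$ is immediate from the definition of $V_{\max}$: there exists $C_\eps$ with $\int_t^0 V\circ\varphi_s(z)\, ds \leq |t|(V_{\max}+\eps) + C_\eps$ uniformly in $z$ for $t\leq 0$, so $\|M_t\|_{C^0}\leq C'_\eps e^{(V_{\max}+\eps)|t|}$. For the $C^1$ bound I differentiate under the integral sign,
\[
d_z M_t(z) = M_t(z)\int_{t}^{0}(dV)_{\varphi_s(z)}\cdot d\varphi_s(z)\, ds,
\]
and use the uniform bound $|d\varphi_s|_G\leq C_\eps e^{(\mu_{\max}+\eps)|s|}$ together with the boundedness of $dV$ to obtain $\|M_t\|_{C^1}\leq C''_\eps e^{(V_{\max}+\mu_{\max}+2\eps)|t|}$. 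Combining with \eqref{boundC1} through the elementary Leibniz inequality $\|gh\|_{C^1}\leq 2(\|g\|_{C^0}\|h\|_{C^1}+\|g\|_{C^1}\|h\|_{C^0})$ yields
\[
\|M_t\,\mc{L}^tf\|_{C^1}\leq C'''_\eps\, e^{(V_{\max}+\mu_{\max}+2\eps)|t|}\,\|f\|_{C^1}.
\]

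Then the $t$-integral is dominated by $\int_0^{\infty} e^{-({\rm Re}(\la) - V_{\max}-\mu_{\max}-2\eps)t}\, dt$, which is finite as soon as $\eps$ is chosen small enough that $V_{\max}+\mu_{\max}+2\eps<{\rm Re}(\la)$; this is always possible when ${\rm Re}(\la)>\mu_{\max}+V_{\max}$. This shows that the integral formula defines a bounded operator on $C^1(\mc{M})$ in the claimed region. Finally, since for ${\rm Re}(\la)$ very large the same formula agrees with the $L^2$-resolvent $R_P(\la)$ constructed in the previous lemma, and since $R_P(\la)$ is analytic in the half-plane ${\rm Re}(\la)>V_{\max}$ (from that same lemma) and the integral is analytic in $\la$ in ${\rm Re}(\la)>V_{\max}+\mu_{\max}$, they coincide on the whole latter region by analytic continuation, giving the stated $C^1\to C^1$ boundedness of $R_P(\la)$.

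The only point requiring any care is the $C^1$-estimate on $M_t$; there is no serious obstacle, just the bookkeeping of the Leibniz rule and of the two exponential rates $\mu_{\max}$ and $V_{\max}$, each of which is achieved up to an arbitrarily small loss $\eps$.
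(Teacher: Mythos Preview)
Your proposal is correct and follows exactly the approach of the paper: start from the integral formula for $R_P(\la)$ and use the $C^1$-bound \eqref{boundC1} on $\mc{L}^t$ to show convergence in $C^1$. The paper's proof is a one-line reference to \eqref{boundC1}; you have simply unpacked the Leibniz-rule estimate on the weight $M_t=e^{\int_t^0 V\circ\varphi_s\,ds}$ that is implicit there, and your final analytic-continuation remark, while harmless, is not strictly needed since the integral already defines the resolvent on $C^1$ throughout the region ${\rm Re}(\la)>\mu_{\max}+V_{\max}$.
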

\begin{proof}
The resolvent of $P=-X+V$ for ${\rm Re}(\la)>0$ is given by the expression 
\[ R_P(\la)f= -\int_{-\infty}^0 e^{\la t+\int_{t}^0\mc{L}^sVds}\mc{L}^tf \, dt\]
and \eqref{boundC1} shows that the integral converges in $C^1$ norm if ${\rm Re}(\la)>\mu_{\max}+V_{\max}$. 
\end{proof}

Next, define the potential  $W:=V-r_-$ and the quantities 
\[ %D_{\min}:=\lim_{t\to -\infty}\inf_{z\in \mc{M}} \frac{1}{|t|}\int_{t}^0D(\varphi_s(z))ds, \quad 
W_{\max}:=\lim_{t\to -\infty} \sup_{z\in \mc{M}}\frac{1}{|t|}\int_{t}^0W(\varphi_s(z))ds.\]
which in turn are bounded by $W_{\max}\leq V_{\max}-\mu_{\min}$. 
We obtain 
\begin{lemm}\label{resolvr}
Let $r_-$ be the function of Lemma \ref{existunstable}, $V\in C^1(\mc{M})$ and 
$W=V-r_{-}$.
The operator $P'=-X+W$ has an analytic resolvent $R_{P'}(\la): C^0(\mc{M})\to C^0(\mc{M})$ in the region $\{{\rm Re}(\la)>W_{\max}\}$, given by the convergent expression
\begin{equation}\label{formulaRPr}
R_{P'}(\la)f:= -\int_{-\infty}^0 e^{\la t+\int_{t}^0 \mc{L}^sWds}(\mc{L}^tf) dt
\end{equation}
and satisfying $(P'-\la)R_{P'}(\la)={\rm Id}$ in the distribution sense. If $f\in C^1(\mc{M})$, then for ${\rm Re}(\la)\geq W_{\max}+s\mu_{\max}$ 
with $s\in (0,1]$, we have for all $\eps>0$
\begin{equation}\label{regul} 
R_{P'}(\la)f \in C^{s-\eps}(\mc{M}).
\end{equation}
Finally, there is no $C^0(\mc{M})$ solution $\omega$ to $(P'-\la)\omega=0$ in the region $\{{\rm Re}(\la)>W_{\max}\}$.
\end{lemm}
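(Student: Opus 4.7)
The plan is to verify the four assertions of the lemma in turn.

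For the convergence and analyticity of \eqref{formulaRPr}, I exploit the definition of $W_{\max}$: for each $\eps>0$ there is $T_\eps>0$ so that $\int_t^0 W(\varphi_s(z))ds\leq |t|(W_{\max}+\eps)$ uniformly in $z\in\mc{M}$ and in $t\leq -T_\eps$. Combined with $\|\mc{L}^tf\|_{C^0}\leq\|f\|_{C^0}$, this dominates the integrand by $e^{(\Re(\la)-W_{\max}-\eps)t}\|f\|_{C^0}$ for $t$ negative enough, giving absolute convergence in $C^0(\mc{M})$ when $\Re(\la)>W_{\max}$; analyticity in $\la$ then follows from differentiation under the integral.

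For the identity $(P'-\la)R_{P'}(\la)f=f$, I compute $XR_{P'}(\la)f(x)=\tfrac{d}{d\tau}\big|_{\tau=0}R_{P'}(\la)f(\varphi_\tau(x))$. The substitutions $u=t+\tau$ and $s'=s+\tau$ rewrite the integral as
\[
R_{P'}(\la)f(\varphi_\tau(x))=-e^{-\la\tau+\int_0^\tau W(\varphi_{s'}(x))ds'}\int_{-\infty}^\tau e^{\la u+\int_u^0 W(\varphi_{s'}(x))ds'}f(\varphi_u(x))du.
\]
Differentiating at $\tau=0$ produces a prefactor contribution of $(W-\la)R_{P'}(\la)f(x)$ and an upper-limit contribution of $-f(x)$, which rearrange to $(-X+W-\la)R_{P'}(\la)f=f$.

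For the $C^{s-\eps}$ regularity, I write the integrand as $k(t,x)\cdot\mc{L}^tf(x)$ with $k(t,x)=e^{\la t+\int_t^0 W(\varphi_s(x))ds}$. Lemma \ref{normL^s} gives $\|\mc{L}^tf\|_{C^1}\leq C_\eps e^{(\mu_{\max}+\eps)|t|}\|f\|_{C^1}$, while differentiating $k(t,\cdot)$ in $x$, using $W\in C^1$ and the Anosov bound $|d\varphi_s|\leq C_\eps e^{(\mu_{\max}+\eps)|s|}$, yields the matching estimate $\|k(t,\cdot)\|_{C^1}\leq C e^{(\mu_{\max}+\eps)|t|}|k(t,\cdot)|_\infty$. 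Combining the interpolation $\|u\|_{C^\sigma}\leq C\|u\|_{C^0}^{1-\sigma}\|u\|_{C^1}^\sigma$ for $\sigma\in(0,1)$ with the Leibniz-type H\"older inequality $\|gh\|_{C^\sigma}\leq \|g\|_{C^\sigma}\|h\|_{C^0}+\|g\|_{C^0}\|h\|_{C^\sigma}$ gives
\[
\|k(t,\cdot)\mc{L}^tf\|_{C^\sigma}\leq C\|f\|_{C^1}\,e^{-|t|(\Re(\la)-W_{\max}-\sigma\mu_{\max}-O(\eps))},
\]
integrable on $(-\infty,0)$ whenever $\Re(\la)>W_{\max}+\sigma\mu_{\max}$; choosing $\sigma=s-\eps$ completes the claim.

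Finally, for the uniqueness, suppose $\omega\in C^0(\mc{M})$ solves $(P'-\la)\omega=0$ distributionally. In flow-box coordinates $(t,y)$ where $X=\pl_t$, the distribution identity $\pl_t\omega=(W-\la)\omega$ has continuous right-hand side, so reduces for each fixed $y$ to a continuous one-dimensional Volterra integral equation whose unique solution is $\omega(\varphi_t(x))=e^{-\la t+\int_0^t W(\varphi_s(x))ds}\omega(x)$. Setting $t=-T$ and taking moduli yields $|\omega(x)|\leq \|\omega\|_{C^0}\,e^{T(W_{\max}+\eps-\Re(\la))}$, which tends to $0$ as $T\to\infty$ when $\Re(\la)>W_{\max}$, forcing $\omega\equiv 0$. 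The delicate step is the regularity one: balancing the two competing exponential growth rates between the kernel $k$ and the pullback $\mc{L}^tf$ via the Leibniz-type H\"older inequality rather than crude submultiplicativity is what yields the sharp exponent $s\mu_{\max}$ and matches the spectral region asserted in the lemma.
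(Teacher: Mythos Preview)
Your proof is correct. Parts 1, 2 and 4 match the paper's argument (you are in fact more explicit: you actually write out the verification of $(P'-\la)R_{P'}(\la)={\rm Id}$, and you justify the passage from the distributional equation to the pointwise formula for $\omega$ via flow-box coordinates, where the paper simply says ``in the weak sense'').

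The one genuine methodological difference is in the regularity statement \eqref{regul}. The paper first establishes the endpoint $C^1$ bound on the line $\Re(\la)=W_{\max}+\mu_{\max}$ and then invokes complex interpolation (the Hadamard three-line theorem) in the parameter $\la$ between this line and the $C^0$ line $\Re(\la)=W_{\max}+\eps$ to obtain $C^{s-\eps}$ for intermediate $s$. You instead work at a fixed $\la$ and interpolate in real variables at the level of the integrand: you bound $\|k(t,\cdot)\|_{C^\sigma}$ and $\|\mc{L}^tf\|_{C^\sigma}$ separately via $\|u\|_{C^\sigma}\leq C\|u\|_{C^0}^{1-\sigma}\|u\|_{C^1}^\sigma$, then combine them with the H\"older Leibniz inequality. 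Your route is more elementary (no holomorphic machinery) and gives the same sharp threshold $\Re(\la)>W_{\max}+\sigma\mu_{\max}$; the paper's route is shorter once the three-line theorem is taken for granted and exploits the analytic dependence on $\la$ that has already been established.
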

\begin{proof} The proof of the first statement is straightforward using that for each $\eps>0$ small, we have for 
$t<0$ large enough and uniformly on $\mc{M}$
\[ \int_{t}^0 \mc{L}^sW \, ds \leq (W_{\max}+\eps)|t|.\]

For the regularity \eqref{regul}, we observe that for $s=1$ this follows directly from the expression \eqref{formulaRPr} and the bound \eqref{boundC1}. To obtain the $s<1$ case, it suffices to use interpolation (i.e Hadamard three line theorem) between the line 
${\rm Re}(\la)=W_{\max}+\eps$ where we have $C^0$ bounds 
and the line ${\rm Re}(\la)=W_{\max}+\mu_{\max}$ where we have $C^1$ bounds.

To prove that $(P'-\la)$ is injective on $C^0(\mc{M})$, assume $(P'-\la)\omega=0$ and let $\omega(t)=\mc{L}^t\omega\in C^0(\mc{M})$. We have in the weak sense
\[ \pl_t \omega(t)=\mc{L}^tX\omega=-\mc{L}^t(r_--V+\la)\omega=(\mc{L}^t(W) -\la) \omega(t)\]
and therefore $\omega(t)=\omega e^{-\la t-\int_{t}^0 \mc{L}^sW\,ds}$. Since 
$||\omega(t)||_{C^0}\leq ||\omega||_{C^0}$, we can let $t\to -\infty$ and we obtain a contradiction if $\omega\not=0$.
\end{proof}

A first consequence of Lemma \eqref{resolvr} is that for each $V\in C^1(\mc{M})$ there exists a function $\alpha_V:=R_{-X-r_-}(0)U_-(V)$ satisfying 
\begin{equation}\label{defofA} 
\forall \eps>0,\,\, \alpha_V\in C^{\frac{\mu_{\min}}{\mu_{\max}}-\eps}(\mc{M}), \quad (-X-r_-)\alpha_V=U_-(V). \end{equation}
The operator $U_-$ is not (a priori) skew-adjoint with respect to the measure $dm$: one has $U_-^*=-U_--{\rm div}(U_-)$ where ${\rm div}(U_-)\in C^{1-}(\mc{M})$ is the divergence of $U_-$ with respect to the contact measure $dm$. We observe that 
\begin{equation}\label{casV=r_-}
V=r_- \Longrightarrow \alpha_V={\rm div}(U_-).
\end{equation}
Indeed, taking the adjoint of \eqref{XU_-}, we have the identity of operators
\[ (-X-r_-)U_-^*=U_-^*(-X+r_-)-r_-U_-^*\]  
and therefore 
\[ (-X-r_-)({\rm div}(U_-))=-(-X-r_-)U_-^*(1)=-U_-^*(r_-)+r_-U_-^*(1)=U_-(r_-).\]
which shows \eqref{casV=r_-}. In particular we see that $\alpha_V\in C^{1-}(\mc{M})$ in that case.  

Now we can give a short proof of the following 
\begin{theo}\label{mainresult}
Let $V\in C^{2-}(\mc{M})$, $W:=V-r_-$, $P:=-X+V$ and $P':=-X+W$. Let 
$\alpha_V$ be the function of \eqref{defofA} and assume that $\alpha_V\in C^{s-}(\mc{M})$ for some $s\in [\frac{\mu_{\min}}{\mu_{\max}},1)$.
In the region $\{{\rm Re}(\la)>{\rm Pr}(W-r_-)\}$, the operator $R_{P'}(\la)(U_-+\alpha_V): C^{\infty}(\mc{M})\to \mc{D}'(\mc{M})$ is analytic and one has  the identity
\begin{equation}\label{identresolv2}  
(U_-+\alpha_V)R_{P}(\la)=R_{P'}(\la)(U_-+\alpha_V).
\end{equation}
in $\{{\rm Re}(\la)>-\mu_{\min}s+{\rm Pr}(W)\}$. 
For each generalized resonant state $u$ of $P$ with resonance $\la_0$ contained in $\{{\rm Re}(\la)>-\mu_{\min}s+{\rm Pr}(W)\}$,  we have  $(U_-+\alpha_V)u=0$.
\end{theo}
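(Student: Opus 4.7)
The plan is to proceed in three steps: verify an algebraic intertwining on smooth functions, promote it to a resolvent identity via meromorphic continuation, and extract horocyclic invariance from a residue argument at each resonance.

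First, I would verify the algebraic intertwining
\[
(U_- + \alpha_V)(P - \la) = (P' - \la)(U_- + \alpha_V) \quad \textrm{on } C^{\infty}(\mc{M}).
\]
Expanding both sides on $f \in C^{\infty}(\mc{M})$, the terms $-\alpha_V X f$ cancel, the top-order commutator $-U_- X f + X U_- f = -r_- U_- f$ (from Lemma \ref{existunstable}) combines with the potential mismatch $(V-W)U_-f = r_- U_- f$ to eliminate all terms linear in $U_- f$, and what remains is the zeroth-order residual $(U_-(V) + X\alpha_V + r_- \alpha_V) f$; this vanishes by the defining equation $(-X - r_-)\alpha_V = U_-(V)$ of $\alpha_V$ in \eqref{defofA}.

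Second, I would promote this to a resolvent identity. For ${\rm Re}(\la) \gg 0$ both $R_P(\la)$ and $R_{P'}(\la)$ are bounded on $L^2(\mc{M}, dm)$, by Corollary \ref{cor1} and Lemma \ref{resolvr}. For $g \in C^{\infty}(\mc{M})$, setting $f := R_P(\la) g$, the intertwining gives $(P' - \la)(U_- + \alpha_V) f = (U_- + \alpha_V) g$, and applying $R_{P'}(\la)$ yields
\[
(U_- + \alpha_V) R_P(\la) = R_{P'}(\la)(U_- + \alpha_V) \quad \textrm{on } C^{\infty}(\mc{M}).
\]
I would then extend this identity to the half-plane $\{{\rm Re}(\la) > -\mu_{\min} s + {\rm Pr}(W)\}$ by meromorphic continuation, using Proposition \ref{BGL} applied to $P$ (with H\"older exponent $q$ slightly below $s$) to obtain meromorphy of $R_P(\la)$ on a Banach space $\mc{B}_{1,q}$ containing $C^1(\mc{M})$, while checking that $(U_- + \alpha_V)$ and $R_{P'}(\la)$ admit compatible boundedness on the relevant images.

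Third, for the analyticity of $R_{P'}(\la)(U_- + \alpha_V)$ in $\{{\rm Re}(\la) > {\rm Pr}(W - r_-)\}$, I would invoke Proposition \ref{BGL} applied to $P' = -X + W$ (whose potential $W \in C^{2-}(\mc{M})$), interpreting ${\rm Pr}(W - r_-)$ as the first-resonance threshold so that $R_{P'}(\la)$ is analytic above it on the anisotropic space containing $(U_- + \alpha_V) C^{\infty}(\mc{M}) \subset C^{s-}(\mc{M})$. A variational-principle inequality ${\rm Pr}(W - r_-) \leq {\rm Pr}(W) - \mu_{\min}$ (using $\int r_-\, d\mu \geq \mu_{\min}$ for every invariant measure) then ensures that this analyticity region contains the identity region. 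Finally, given a resonance $\la_0 \in \{{\rm Re}(\la) > -\mu_{\min} s + {\rm Pr}(W)\}$ of $P$ and a generalized resonant state $u = \Pi_{\la_0} v$, taking residues in the identity at $\la_0$ produces a left-hand side residue $-(U_- + \alpha_V)\Pi_{\la_0}$, while the right-hand side residue vanishes by the analyticity of Step 3. Hence $(U_- + \alpha_V)\Pi_{\la_0} = 0$ as an operator, and in particular $(U_- + \alpha_V) u = 0$.

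The main obstacle is the analyticity claim of Step 3: since $\alpha_V$ has only H\"older regularity $s < 1$, the image $(U_- + \alpha_V) C^{\infty}(\mc{M})$ lies only in $C^{s-}(\mc{M})$, forcing one to work inside the Butterley--Liverani/Gouezel--Liverani anisotropic Banach framework rather than classical function spaces, and to certify that $R_{P'}(\la)$ extends analytically (not merely meromorphically) above ${\rm Pr}(W - r_-)$ on the distributional image of $U_- + \alpha_V$. A secondary technical point is checking that the various Banach spaces used in the meromorphic continuation of Step 2 are mutually compatible so that the final identity makes sense as a distributional one.
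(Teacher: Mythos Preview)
Your proposal is correct and follows essentially the same route as the paper: the commutator identity $[-X+V,\,U_-+\alpha_V]=r_-(U_-+\alpha_V)$, a resolvent identity for ${\rm Re}(\la)\gg 0$, meromorphic continuation, and a residue at $\la_0$. The one place the paper is more careful is your Step~2: instead of ``applying $R_{P'}(\la)$'' as a left inverse (which you have not justified --- Lemma~\ref{resolvr} only gives $(P'-\la)R_{P'}(\la)={\rm Id}$), the paper sets $\omega:=(U_-+\alpha_V)R_P(\la)u-R_{P'}(\la)(U_-+\alpha_V)u$, checks $\omega\in C^0(\mc{M})$ via Corollary~\ref{cor1} and Lemma~\ref{resolvr}, and then invokes the final injectivity clause of Lemma~\ref{resolvr} (no $C^0$ solution of $(P'-\la)\omega=0$ for ${\rm Re}(\la)>W_{\max}$) to conclude $\omega=0$.
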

\begin{proof}  
It suffices to prove \eqref{identresolv2} for ${\rm Re}(\la)$ large enough and then use meromorphic continuation in $\la$.  
Let $u\in C^\infty(\mc{M})$ and assume that ${\rm Re}(\la)>\mu_{\max}+V_{\max}$. 
By Lemma \ref{existunstable}, we have 
\[ [-X+V,U_-+\alpha_V]=r_-(U_-+\alpha_V)-U_-(V)-(X+r_-)(\alpha_V)=r_-(U_-+\alpha_V)
\]
and thus 
\begin{equation}\label{eqonU-v} 
\begin{gathered}
(-X+V-r_--\la)((U_-+\alpha_V)R_P(\la)u-R_{P'}(\la)(U_-+\alpha_V)u)=\\
(U_-+\alpha_V)(-X+V-\la)R_P(\la)u-(U_-+\alpha_V)u=0.
\end{gathered}\end{equation}
Thus $\omega:=(U_-+\alpha_V)R_P(\la)u-R_{P'}(\la)(U_-+\alpha_V)u$ is in $\ker (P'-\la)$. We also 
know from Corollary \ref{cor1} that  $(U_-+\alpha_V)R_P(\la)u\in C^0(\mc{M})$ and from Lemma \ref{resolvr}  that  $R_{P'}(\la)U_-u\in C^0(\mc{M})$. By Lemma \ref{resolvr} again, we know that there is no $C^0$ solution to $(P'-\la)\omega=0$ in $\{{\rm Re}(\la)> -\mu_{\min}+V_{\max}\}$ thus $\omega=0$ and the proof of  \eqref{identresolv2}  in $\{{\rm Re}(\la)>\mu_{\max}+V_{\max}\}$ is complete. Among the terms in \eqref{identresolv2}, all have meromorphic extension to $\{{\rm Re}(\la)>-\mu_{\min}s+{\rm Pr}(W)\}$ as operators mapping 
$C^\infty(\mc{M})$ to $\mc{D}'(M)$. 
Taking the residue at a resonance $\la_0\in \{{\rm Re}(\la)>-\mu_{\min}s+{\rm Pr}(W)\}$ in the identity \eqref{identresolv2}, we obtain 
\[ (U_-+\alpha_V) \Pi_{\la_0}=0,\]
if $\Pi_{\la_0}={\rm Res}_{\la_0}R_P(\la)$, thus the range of $\Pi_{\la_0}$ belongs to $\ker (U_-+\alpha_V)$, i.e generalized resonant states are in $\ker (U_-+\alpha_V)$. 
\end{proof}
We can view the first order differential operator $U_-+\alpha_V$ as a connection along the unstable leaves. There are three cases of particular interest which follow:
taking $V=0$ in the first case, $V=r_-$ in the second case and $V=\demi r_-$ in the third case, we obtain (using \eqref{casV=r_-})
\begin{corr}\label{maincor}
1) The operator $R_{-X-r_-}(\la)U_-: C^{\infty}(\mc{M})\to \mc{D}'(\mc{M})$ is analytic in the region $\{{\rm Re}(\la)>-\mu_{\min}\}$ and one has in that region
\begin{equation}\label{identresolv}  
U_-R_{-X}(\la)=R_{-X-r_-}(\la)U_-.
\end{equation}
Each generalized resonant state $u$ of $-X$ with resonance $\la_0$ contained in the region $\{{\rm Re}(\la)>-\mu_{\min}\}$ satisfies $U_-u=0$.\\
2) The operator $R_{-X}(\la)U_-^*: C^{\infty}(\mc{M})\to \mc{D}'(\mc{M})$ is analytic in the region $\{{\rm Re}(\la)>0\}$ and one has in $\{{\rm Re}(s)>h_{\rm top}-\mu_{\min}\}$
\begin{equation}\label{identresolvstar}  
U_-^*R_{-X+r_-}(\la)=R_{-X}(\la)U_-^*,
\end{equation} 
where $h_{\rm top}={\rm Pr}(0)$ is the topological entropy of the flow of $X$. Each generalized resonant state $u$ of $-X+r_-$ with resonance $\la_0$ contained in  $\{{\rm Re}(\la)>h_{\rm top} -\mu_{\min}\}$ satisfies $U_-^*u=0$.\\
3) The operator $R_{-X-\demi r_-}(\la)(U_-+\demi {\rm div}(U_-)): C^{\infty}(\mc{M})\to \mc{D}'(\mc{M})$ is analytic in the region $\{{\rm Re}(\la)>{\rm Pr}(-\demi r_-)\}$ and the following identity holds 
\begin{equation}\label{identresolvstar}  
(U_-+\demi {\rm div}(U_-))R_{-X+\demi r_-}(\la)=R_{-X-\demi r_-}(\la)(U_-+\demi {\rm div}(U_-)),
\end{equation} 
in $\{{\rm Re}(s)>{\rm Pr}(-\demi r_-)-\mu_{\min}\}$.
Each generalized resonant state $u$ of $-X+\demi r_-$ with resonance $\la_0$ contained in  $\{{\rm Re}(\la)>{\rm Pr}(-\demi r_-)-\mu_{\min}\}$ satisfies $(U_-+\demi {\rm div}(U_-))u=0$.
\end{corr}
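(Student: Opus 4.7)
The plan is to apply Theorem~\ref{mainresult} to three specific potentials $V=0$, $V=r_-$ and $V=\demi r_-$, and then reorganize the resulting identities and regions of validity into the form stated in the three parts of the corollary. For each choice of $V$, I would first compute $W:=V-r_-$ and $\alpha_V:=R_{-X-r_-}(0)U_-(V)$ explicitly, check that $\alpha_V$ lies in $C^{s-}(\mc{M})$ for values of $s<1$ that produce the desired region, and then read off the operator $U_-+\alpha_V$ and the half-plane $\{{\rm Re}(\la)>-\mu_{\min}s+{\rm Pr}(W)\}$ directly from the theorem.

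For case 1 ($V=0$), one has $U_-(V)=0$, hence $\alpha_V=0\in C^{\infty}(\mc{M})$, and every $s<1$ is admissible. Here $W=-r_-$, and the classical identity ${\rm Pr}(-r_-)=0$ (already invoked by the authors just after Proposition~\ref{BGL}) makes the region collapse to $\{{\rm Re}(\la)>-\mu_{\min}\}$ on letting $s\nearrow 1$. Theorem~\ref{mainresult} then yields the intertwining identity $U_-R_{-X}(\la)=R_{-X-r_-}(\la)U_-$ together with the vanishing $U_-\Pi_{\la_0}=0$ at every resonance $\la_0$ in that half-plane. Analyticity of $R_{-X-r_-}(\la)U_-$ in the same region follows a posteriori: by the identity, this operator coincides with $U_-R_{-X}(\la)$, which is meromorphic in $\la$ with residue $U_-\Pi_{\la_0}=0$ at every Ruelle pole of $R_{-X}(\la)$, hence analytic.

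For cases 2 and 3, the crucial computation is \eqref{casV=r_-}, which identifies $\alpha_V={\rm div}(U_-)\in C^{1-}(\mc{M})$ when $V=r_-$; by linearity of the assignment $V\mapsto U_-(V)\mapsto \alpha_V$, we also obtain $\alpha_V=\demi\,{\rm div}(U_-)\in C^{1-}(\mc{M})$ when $V=\demi r_-$. Both lie in $C^{1-}$, so again $s<1$ is admissible. Case 2 ($V=r_-$, $W=0$) then yields the operator $U_-+\alpha_V=-U_-^*$ and the region $\{{\rm Re}(\la)>h_{\rm top}-\mu_{\min}\}$ via ${\rm Pr}(0)=h_{\rm top}$; case 3 ($V=\demi r_-$, $W=-\demi r_-$) gives the operator $U_-+\demi\,{\rm div}(U_-)$ and the region $\{{\rm Re}(\la)>{\rm Pr}(-\demi r_-)-\mu_{\min}\}$. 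Analyticity of the relevant resolvent compositions on the half-planes appearing in the corollary comes either from Theorem~\ref{mainresult}'s own analyticity statement applied to $P'$, or, in the case of $R_{-X}(\la)$ acting on $L^2$-based spaces, from skew-adjointness of $-X$ on $L^2(\mc{M},dm)$.

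Beyond Theorem~\ref{mainresult} itself, the only substantive input is the pressure identity ${\rm Pr}(-r_-)=0$, which is standard for contact Anosov flows, together with the already established $C^{1-}$ regularity of ${\rm div}(U_-)$. The step I expect to require the most care is the clean matching of the several analyticity half-planes listed in the corollary with those produced directly by Theorem~\ref{mainresult}; once the vanishing of the residues $(U_-+\alpha_V)\Pi_{\la_0}=0$ is in hand, this reduces to a routine pressure-monotonicity comparison.
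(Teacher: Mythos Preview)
Your proposal is correct and follows exactly the paper's own approach: the paper simply says ``taking $V=0$ in the first case, $V=r_-$ in the second case and $V=\demi r_-$ in the third case, we obtain (using \eqref{casV=r_-})'', and you have spelled out precisely these substitutions, together with the linearity of $V\mapsto\alpha_V$ and the identifications ${\rm Pr}(-r_-)=0$, ${\rm Pr}(0)=h_{\rm top}$. One small remark: your a~posteriori analyticity argument (deducing analyticity of the right-hand side from vanishing residues on the left) is valid, but the more direct route the paper has in mind is Lemma~\ref{resolvr}, which already gives $R_{P'}(\la):C^0\to C^0$ analytic in $\{{\rm Re}(\la)>W_{\max}\}$, hence $R_{P'}(\la)(U_-+\alpha_V):C^\infty\to\mc{D}'$ analytic there; this covers cases~1 and~3 immediately, while case~2 is exactly the skew-adjointness argument you give.
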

The study of the spectrum in the third case, with potential $V=\demi r_-$, has been studied in details by Faure-Tsujii \cite{FaTs2} using the Grassmanian extension. It is particularly interesting since the first band of resonances concentrate near $\{{\rm Re}(\la)=0\}$. It can be noted that the horocyclic derivative $\mc{U}_-:=U_-+\demi {\rm div}(U_-)$ is skew-adjoint with respect to the contact measure $dm$.

\subsection{Second proof}
The second proof is more technical. For simplicity we only deal with the case $V=0$.
First, we need the following 
\begin{prop}\label{regulariteomega}
Let $u\in \mc{H}^{1-\eps}$ for some $\eps>0$ and define $\omega=U_-u$. There exist two pseudo-differential operators $A_1,A_2\in \Psi^0(\mc{M})$  such that ${\rm WF}(A_2)$ is contained in a small conic neighborhood $W$ of $E_u^*$, $A_1+A_2={\rm Id}$  and 
\[ A_1\omega \in L^2(\mc{M}) , \quad A_2\omega\in H^{-2+\eps}(\mc{M}).\]
\end{prop}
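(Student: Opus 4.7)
The idea is to exploit the anisotropic regularity of $u\in\mc{H}^{1-\eps}$, which by Proposition \ref{fauresjos}(3) is much better than the global $H^{-1+\eps}$ bound microlocally outside $E_u^*$, together with the fact that $U_-$ is a first-order differential operator with $C^{2-}$ coefficients. I split $\omega = U_- u$ microlocally: the piece $A_2\omega$ microsupported near $E_u^*$ inherits the global regularity of $\omega$, while the piece $A_1\omega$ should gain regularity from the improved behavior of $u$ in the transverse directions.

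\textbf{Cutoffs and the easy piece.} Take $A_2\in\Psi^0(\mc{M})$ with principal symbol equal to $1$ in a small conic neighborhood of $E_u^*$ and microsupported in $W$, and set $A_1 := {\rm Id}-A_2$. Introduce a second partition ${\rm Id} = B_1 + B_2$ with $B_2\in\Psi^0(\mc{M})$ microsupported strictly inside the open set where the principal symbol of $A_1$ vanishes, so that $\WF(A_1)\cap\WF(B_2) = \emptyset$ while $\WF(B_1)$ still avoids a (slightly smaller) conic neighborhood of $E_u^*$. Since $\mc{H}^{1-\eps}\subset H^{-1+\eps}(\mc{M})$ and $U_-$ is first order with $C^{2-}$ coefficients, the duality formula $\cjg U_- u,f\cjd = \cjg u,-U_-f-{\rm div}(U_-)f\cjd$ together with the multiplication estimates of Lemma \ref{boundednessPSDO} yields $\omega\in H^{-2+\eps}(\mc{M})$ globally, so $A_2\omega\in H^{-2+\eps}$ is immediate. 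By Proposition \ref{fauresjos}(3), for $\mc{H}^{1-\eps}$ chosen compatibly one also has $B_1 u\in H^{N_0}(\mc{M})$ with $N_0$ as large as needed.

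\textbf{The hard piece $A_1\omega$.} Write
\[ A_1\omega = A_1 U_- B_1 u + A_1 U_- B_2 u. \]
The first term lies in $L^2$: since $B_1 u\in H^{N_0}$ is classically regular, the $C^{2-}$ first-order operator $U_-$ sends it to $H^{1-\delta}(\mc{M})$ for any $\delta>0$ (Lemma \ref{boundednessPSDO} and the fact that multiplication by a $C^{2-}$ function preserves $H^s$ for $|s|<2$), and $A_1\in\Psi^0$ preserves this. For the second term, in a local chart write $U_- = \sum_i a_i\partial_i$ and decompose
\[ A_1 a_i\partial_i B_2 = a_i A_1\partial_i B_2 + [A_1,a_i]\partial_i B_2. \]
Since $A_1$ and $B_2$ are smooth PSDOs with disjoint microsupports, $A_1\partial_i B_2\in\Psi^{-\infty}(\mc{M})$, so the first summand maps $H^{-1+\eps}$ into $C^{2-}(\mc{M})\subset L^2$. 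For the remaining term, Taylor's symbolic calculus gives $[A_1,a_i]\in C^{1-}\Psi^{-1}(\mc{M})$ with microsupport contained in $\WF(A_1)$, disjoint from $\WF(\partial_i B_2)\subset\WF(B_2)$; combining the order $-1$, the disjointness of these microsupports, and a paraproduct decomposition of $a_i$ in the spirit of \cite{Ta}, one extracts enough Sobolev gain to map $B_2 u\in H^{-1+\eps}$ into $L^2$.

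\textbf{Main obstacle.} The delicate ingredient is the last step. In the $C^\infty$ calculus, two PSDOs with disjoint microsupports compose into a smoothing operator, but once the intermediate operator $U_-$ has only $C^{2-}$ coefficients one obtains smoothing only up to a finite order controlled by the coefficient regularity. Converting this into the precise Sobolev gain needed to cross from $H^{-1+\eps}$ into $L^2$ requires a paraproduct decomposition of the coefficients $a_i$ combined with the $C^r$-symbol boundedness statements of Lemma \ref{boundednessPSDO}; this is where the careful analysis lies, since the margin between global regularity $H^{-2+\eps}$ and the target $L^2$ must be recovered entirely from microlocal disjointness filtered through a $C^{2-}$ operator.
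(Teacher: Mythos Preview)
Your setup and the treatment of the easy pieces coincide with the paper's proof: both introduce nested microlocal partitions $B_1+B_2={\rm Id}$ and $A_1+A_2={\rm Id}$, use Proposition~\ref{fauresjos}(3) to get $B_1u\in H^{N_0}$, observe that $A_2\omega\in H^{-2+\eps}$ globally, and reduce everything to showing $A_1 U_- B_2 u\in L^2$ in a chart with $u_2:=B_2u\in H^{-1+\eps}$ having wave-front set near $E_u^*$. Your identification of this term as the only obstacle is correct.

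The gap is in your handling of that term. Your decomposition $A_1 a_i\partial_i B_2 = a_i A_1\partial_i B_2 + [A_1,a_i]\partial_i B_2$ takes care of the first summand correctly, but the second is not closed. The assertion that $[A_1,a_i]\in C^{1-}\Psi^{-1}$ carries a well-defined microsupport contained in $\WF(A_1)$, and that this disjointness from $\WF(B_2)$ then yields extra Sobolev gain, is not a theorem of the rough calculus: for $C^r$ symbols, ``microsupport'' in the $\xi$-variable does not localize compositions the way it does for smooth symbols, and the disjoint-microsupport $\Rightarrow$ smoothing mechanism is a $C^\infty$ phenomenon. Granting only the order $-1$ of the commutator, you land in $H^{-1+\eps}$, still $1-\eps$ derivatives short of $L^2$, and your final sentence (``a paraproduct decomposition \dots\ one extracts enough Sobolev gain'') is precisely the statement to be proved, not an argument.

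The paper closes this step with a specific theorem rather than a commutator estimate. It writes $\partial_{x_j}u_2 = {\rm Op}(|\xi|^{2-\eps}\chi)u_2' + r_2$ with $u_2'\in L^2$, $r_2\in H^{N_0-1}$, and $\chi$ a smooth $0$-symbol supported in a cone around $E_u^*$; then ${\rm Op}(a_j(x)|\xi|^{2-\eps}\chi(\xi)) = T + R$ by Bony's Theorem~3.4, with $R$ bounded on $L^2$ and $T$ the paradifferential operator of the $C^{2-}S^{2-\eps}_{\rm cl}$ symbol $a_j(x)|\xi|^{2-\eps}\chi(\xi)$. The key input is Bony's Corollary~3.5: for any $v\in L^2$, $Tv$ is microlocally $L^2$ outside the $\xi$-support of the symbol. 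Since that support sits near $E_u^*$ and $A_1$ is microsupported away from it, $A_1 T u_2'\in L^2$, and hence $A_1(a_j\partial_{x_j}u_2)\in L^2$. This is the concrete microlocal-regularity statement for paradifferential operators that replaces the disjoint-microsupport smoothing you were reaching for; without invoking it (or reproving it), your argument does not go through.
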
 
\begin{proof}
Let $B_1,B_2\in \Psi^0(\mc{M})$ so that $B_1+B_2={\rm Id}$, $B_2$ is microsupported (i.e has wave-front set) in a small conic neighborhood $W$ of $E_u^*$ and $B_1$ is microsupported outside a small conic neighborhood $W'\subset W$ of $E_u^*$. Then, due to the property of $\mc{H}^{1-\eps}$ recalled in 3) of Proposition \ref{fauresjos}, $u=u_1+u_2$ with $u_1:= B_1u$ and $u_2=B_2u$ and $B_2u\in H^{-1+\eps}(\mc{M})$, $B_1u\in H^{N_0}(\mc{M})$ for some large $N_0\geq 1$. We obtain $\omega=
U_-u_1+U_-u_2$, with $u_1\in H^{N_0}(\mc{M})$ and $u_2\in H^{-1+\eps}(\mc{M})$.
Let $A_1,A_2\in \Psi^0(\mc{M})$  satisfying the same properties as $B_1,B_2$, then 
\[ \omega= A_1U_-u_1+A_1U_-u_2+A_2\omega \]
with  $A_2\omega \in H^{-2+\eps}(\mc{M})$  and $A_1U_-u_1\in L^2(\mc{M})$. The only term we need to analyse is $A_1U_-u_2$ and to show that it is in $L^2$. By using a partition of unity we can reduce to the case where  $u_2$ is supported in a small chart. Let 
us then consider in a small chart $\mc{O}$ near a point $x_0\in \mc{M}$ the distribution $A_1U_-u_2$: we can write $U_-=\sum_{j=1}^{3}a_j(x)\pl_{x_j}$ in a coordinate system $x=(x_1,x_2,x_3)$ where the chart becomes a neighborhood of $x=0$, with $a_j\in C^{2-}(\mc{M})$. 
We can also arrange the coordinate system so that $E_u^*=dx_1$ at $x=0$ 
and, since $E_u^*$ is a continuous bundle, so that $E_u^*\subset \mc{O}\x V$ over the chart $\mc{O}$, where $V\subset \rr^3$ is a small conic open neighborhood of $dx_1$ containing $W\cap \pi^{-1}(\mc{O})$ (here $\pi:T^*\mc{M}\to \mc{M}$ is the canonical projection). 
We have that $\pl_{x_i}u_2$ is microlocally $H^{N_0-1}$ outside $\mc{O}\x V$. Let $\chi$ be a smooth function on $\rr^3\setminus\{0\}$ which is homogeneous of degree $0$ and equal to $1$ in $V$ and $0$ outside a small conic neighborhood of $V$.
We can write 
$\pl_{x_j}u_2={\rm Op}(|\xi|^{2-\eps}\chi)u_2'+ r_2$ for some $u_2'\in L^2(\mc{O})$ and $r_2\in 
H^{N_0-1}(\mc{O})$. Now, we can use the paradifferential calculus of Bony \cite{Bo}, in particular Theorem 3.4 in \cite{Bo} shows that ${\rm Op}(a_j(x)|\xi|^{2-\eps}\chi(\xi))=T+R$ where $R:L^2(\mc{O})\to L^2(\mc{O})$ is bounded and $T$ is the paradifferential operator associated to the symbol $\sigma: (x,\xi)\mapsto a_j(x)|\xi|^{2-\eps}\chi(\xi)$ which belongs to $C^{2-}S_{\rm cl}^{2-\eps}(\mc{O})$. Using that  $\sigma$ vanishes outside a small conic neighbordhood $\mc{O}\x V'$ of $\mc{O}\x V$,  \cite[Corollary 3.5]{Bo} tells us that for each $v\in L^2$, $Tv$ is microlocally $L^2$ outside $V'$ in the sense that for each $Q\in \Psi^0(\mc{O})$ with microsupport not intersecting $\mc{O}\x V'$, $QTv\in L^2(\mc{O})$. 
Using this with $v=u_2'$, we obtain that $Q(a_j\pl_{x_j}u_2)\in L^2(\mc{O})$ and therefore
$A_1U_-u_2\in L^2(\mc{M})$. This concludes the proof.
\end{proof}
The main technical estimate is the following 
\begin{prop}\label{estimprop}
Let $\eps>0$ and $A\in \Psi^0(\mc{M})$ be a pseudo-differential operators such that 
${\rm WF}(A)$ is contained in a small conic neighborhood of $E_u^*$.
For all $\delta\in(0,\eps)$,  there exists $C_{\delta,\eps}>0$  such that for all $t\leq 0$, all $\omega\in H^{-2+\eps}(\mc{M})$ and all $f\in C^2(\mc{M})$
\begin{equation}\label{toprove}
\Big|\cjg e^{\int_0^t\mc{L}^s(r_--\mu_{\min}) ds}\mc{L}^tA\omega, f\cjd\Big|\leq C_{\delta,\eps}e^{8\delta |t|} ||\omega||_{H^{-2+\eps}(\mc{M})}||f||_{C^2(\mc{M})}.
\end{equation}
\end{prop}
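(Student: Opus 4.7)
The plan is to move the flow and weight onto $f$ via duality, reducing the estimate to a bound on the $H^{2-\eps}$-norm of a pulled-back test function, then to exploit the microsupport of $A^*$ to replace full derivatives with $U_+$-derivatives, which are controlled uniformly in $T$ by the contracting dynamics on $E_s$ and by the $\mu_{\min}$-shift in the weight.

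\textbf{Step 1 (duality reduction).} Set $T := -t \geq 0$. Since $\varphi_t$ preserves $dm$, $(\mc{L}^t)^* = \mc{L}^{-t}$ on $L^2(dm)$. Moving $\mc{L}^t$ and the weight onto $f$ and substituting $u = s+T$ in the integral, the left-hand side of \eqref{toprove} equals
\[
|\cjg \omega, A^* g_T\cjd|, \qquad g_T(x) := \Psi_T(x)\, f(\varphi_T(x)), \qquad \Psi_T(x) := e^{-\int_0^T (r_- - \mu_{\min})(\varphi_u(x))\, du}.
\]
The $\mu_{\min}$-shift precisely offsets the dominant expansion rate, so \eqref{boundwithmu} gives $\|\Psi_T\|_{C^0} \leq C_\delta e^{\delta T}$ for any $\delta>0$. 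By duality,
\[
|\cjg \omega, A^* g_T\cjd| \leq \|\omega\|_{H^{-2+\eps}}\, \|A^* g_T\|_{H^{2-\eps}},
\]
so it suffices to show $\|A^* g_T\|_{H^{2-\eps}} \leq C_{\delta,\eps}\, e^{8\delta T}\|f\|_{C^2}$.

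\textbf{Step 2 (microlocal reduction to $U_+$-derivatives).} Since $A^*$ is microsupported in a small cone around $E_u^*$, and $U_+$ (a section of $E_s$) has principal symbol $i\xi(U_+)$ which is elliptic on $E_u^*$ (because $E_u^*$ annihilates $E_u \oplus \rr X$ but pairs non-trivially with $U_+$), microlocal ellipticity yields, in local coordinates straightening $U_+$,
\[
\|A^* g\|_{H^{2-\eps}} \leq C\big(\|g\|_{L^2} + \|B g\|_{L^2}\big),
\]
where $B$ is a paradifferential realization of $U_+^{2-\eps}$ (fractional power); commutators $[B, A^*]$ and paraproduct remainders are uniformly $L^2$-bounded via the Bony--Taylor calculus of Lemma \ref{boundednessPSDO} applied to the $C^{2-}$ coefficients of $U_+$. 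By Lemma \ref{existunstable},
\[
U_+(f\circ\varphi_T)(x) = e^{-\int_0^T r_+(\varphi_s(x))\, ds}\, (U_+ f)(\varphi_T(x)) = \mc{O}\big(e^{-T(\mu_{\min}-\delta)}\big)\|U_+ f\|_{C^0},
\]
and
\[
U_+\log\Psi_T(x) = -\int_0^T e^{-\int_0^s r_+(\varphi_v(x))\, dv}(U_+ r_-)(\varphi_s(x))\, ds,
\]
whose integrand decays exponentially in $s$ \emph{uniformly in $T$}, giving $\|U_+\Psi_T\|_{C^0} \leq C_\delta\, e^{\delta T}\|U_+ r_-\|_{C^0}$. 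Iterating and applying Leibniz yields $\|B g_T\|_{L^2} \leq C_{\delta,\eps}\, e^{\delta T}\|f\|_{C^2}$, and combining with the microlocal bound proves the desired estimate with plenty of room in the exponent.

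\textbf{Main obstacle.} The main technical difficulty is Step 2: because $U_+$ has only $C^{2-}$ regularity, a classical second $U_+$-derivative of $r_-$ or of $\Psi_T$ need not exist, and one is forced to work with \emph{fractional} $U_+$-derivatives of order strictly less than $2$. The paradifferential realization $B$ must be constructed with symbol in $C^{2-\eps'}S^{2-\eps}$ for some $\eps'<\eps$, and one must check that the commutators $[B, A^*]$ and paraproduct errors are uniformly $L^2$-bounded with constants independent of $T$. This is precisely the range covered by the Bony--Taylor calculus of Lemma \ref{boundednessPSDO}, together with the regularity of $U_+^k$ applied to the coefficients of $U_+$ provided by Lemma \ref{existunstable}.
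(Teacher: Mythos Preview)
Your Step~1 (duality reduction) is correct and matches the paper exactly. The gap is in Step~2. You write that ``iterating and applying Leibniz yields $\|B g_T\|_{L^2}\leq C_{\delta,\eps}e^{\delta T}\|f\|_{C^2}$'', but the computations you actually carry out are only for the \emph{first} $U_+$-derivative of $g_T$ and $\Psi_T$. Iterating once more produces $U_+^2\Psi_T$, which contains the term $\int_0^T e^{-2\int_0^s\mc{L}^u r_+\,du}\mc{L}^s(U_+^2 r_-)\,ds$. Since $r_-$ is only $C^{2-}$, the quantity $U_+^2 r_-$ is not continuous; it lies only in $\cap_{\delta>0}H^{-\delta}$. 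Thus $U_+^2 g_T$ cannot be bounded in $L^2$ uniformly in $T$, and the desired $L^2$ bound on $B g_T$ (whether $B$ is an integer or fractional $U_+$-power) fails as written. Interpolation between order~$1$ and order~$2$ does not help, because you have no order-$2$ endpoint bound to interpolate against.

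The paper's proof confronts this directly. Instead of trying to put $g_T$ in $H^{2-\eps}$, it writes $A\omega=A'\omega'$ with $\omega'=(1+\Delta_G)^{-1+\eps/2}\omega\in L^2$ and $A'\in\Psi^{2-\eps}$, then factors $A'=U_+^2{\rm Op}(b)-U_+{\rm Op}(r_1)-{\rm Op}(r_2)$ with ${\rm Op}(b):L^2\to H^{\eps'}$ for $\eps'<\eps$. The pairing becomes $\cjg{\rm Op}(b)\omega',(U_+^*)^2 F_t\cjd$, and the crucial point is that $(U_+^*)^2 F_t$ is estimated in $H^{-\delta}$ (not $L^2$) with $\delta<\eps'$, which is exactly enough for the pairing. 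The $H^{-\delta}$ norm of the bad term $W_t$ grows like $e^{6\delta\mu_+|t|}$ via Lemma~\ref{normL^s} and interpolation, and the extra $\|\mc{L}^{|t|}f\|_{C^{2\delta}}$ factor gives the final $e^{8\delta|t|}$. Your sketch recognizes the obstacle but does not implement this shift from $L^2$ to $H^{\eps'}\times H^{-\delta}$, which is the mechanism that makes the argument close.
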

\begin{proof} We fix $\mu_-<\mu_{\min}$ arbitrarily close to $\mu_{\min}$ and $\mu_+>\mu_{\max}$ arbitrarily close to $\mu_{\max}$.
We first write 
\begin{equation}\label{relationomega}
\cjg \mc{L}^tA\omega, 
e^{\int_0^t (\mc{L}^sr_--\mu_{-})ds}f\cjd=\cjg A\omega, 
e^{-\int_0^{-t}(\mc{L}^{s}r_--\mu_{-})ds}\mc{L}^{-t}f\cjd.\end{equation}
To simplify notations, we define 
\[ \hat{r}_-:=r_--\mu_{-}\]
which satisfies that there is a constant $C>0$ so that for each $z\in \mc{M}$ and $t\leq 0$
\begin{equation}\label{boundweight}
|e^{-\int_0^{-t}\mc{L}^s\hat{r}_-(z)ds}|\leq C.
\end{equation} 
%We obtain 
%\begin{equation}\label{bound0}
%\Big|\cjg \omega_1, e^{-\int_0^{-t}\mc{L}^s\hat{r}_-\, ds}f\circ\varphi_{-t}\cjd\Big|\leq C||\omega_1||_{L^2}||f||_{C^0}.
%\end{equation}
By using a partition of unity, we reduce to the case where $A$ is supported in a small chart. Let 
$A':=A(1+\Delta_G)^{1-\eps/2}\in \Psi^{2-\eps}(\mc{M})$  with microsupport contained in a small conic neighborhood of $E_u^*$ and $\omega':=(1+\Delta_G)^{-1+\eps/2}\omega$. Note that 
$||\omega'||_{L^2}\leq ||\omega||_{H^{-2+\eps}}$.
 In local coordinates $x$ of the chart, we can write $A'={\rm Op}(a')$ where 
 $a'(x,\xi)$ is a smooth classical symbol of order $2-\eps$ satisfying 
\[ |\pl_x^{\alpha}\pl_\xi^\beta a'(x,\xi)|\leq C_{\alpha,\beta}\cjg \xi\cjd^{2-\eps-|\beta|}, 
\quad a'(x,\xi)\sim \sum_{j=0}^\infty a'_{j}(x,\xi)\]
with $a_{j}'$ homogeneous of degree $2-\eps-j$ in $\xi$. We also have that $a'(x,\xi)$ and its derivatives decay to infinite order in $\xi$ outside a small conic neighborhood $W\in T^*\mc{M}$ of $E_u^*$ (identifying $\mc{M}$ with $\rr^3$ via the chart).
Let $U_+$ be a local $C^{2-}$ section of $E_s$ in the chart ($U_+$ has the properties stated in Lemma \ref{existunstable}). Let $p(x,\xi)$ be the principal symbol of $U_+$ in the chart: it is in $C^{2-}_{\rm cl}S^1(\rr^3)$). Let $\chi\in C^\infty S_{\rm cl}^0(\rr^3)$ be a smooth symbol so that $\chi=1$ on $W$ and $\chi=0$ in a conic neighborhood of $p(x,\xi)=0$.
Let $b:=a'/(p^2+1)\in C^{2-}_{\rm cl}S^{-\eps}(\rr^3)$, which is decaying (with its derivatives) to 
infinite order outside $W$.  We write 
\[ U_+^2{\rm Op}(b)={\rm Op}(a')+U_+{\rm Op}(r_1)+{\rm Op}(r_2),\]
where $r_1,r_2$ are given by 
\[\begin{gathered} 
r_2= -\frac{a'}{p^2+1}+\frac{(\chi-1)}{p}U_+\Big(\frac{a'p}{p^2+1}\Big)-U_+\Big(\frac{\chi}{p}U_+\Big(\frac{a'p}{p^2+1}\Big)\Big), \\ 
r_1=U_+\Big(\frac{a'}{p^2+1}\Big)+\frac{\chi}{p}U_+\Big(\frac{a'p}{p^2+1}\Big),
\end{gathered}\]
and satisfy $r_1\in C^{1-}S_{\rm cl}^{-\eps}(\rr^3)$ and $r_2\in C^{0}S_{\rm cl}^{-\eps}(\rr^3)$. Here we have used that  $U_+^2(p)\in C^0$ due to Lemma \ref{existunstable}. 
By Lemma \ref{boundednessPSDO}, we have the following boundedness
\begin{equation}\label{Opprop}
\begin{gathered}
 {\rm Op}(b): L^2(\rr^3)\to H^{\eps'}(\rr^3), \quad {\rm Op}(r_1): L^2(\rr^3)\to L^2(\rr^3), \\ {\rm Op}(r_2): L^2(\rr^3)\to L^2(\rr^3).
\end{gathered}\end{equation}
for each $\eps'<\eps$. The adjoint of $U_+$ for the invariant measure $\alpha\wedge d\alpha$ 
is $U_+^*=-U_+-{\rm div}(U_+)$, with ${\rm div}(U_+)\in C^{1-}(\rr^3)$. We can then write 
\begin{equation}\label{pairing}
\begin{split}
\cjg \omega,e^{-\int_0^{-t}\mc{L}^s\hat{r}_-\, ds}\mc{L}^{-t}f\cjd= & 
\cjg {\rm Op}(b)\omega', (U_+^*)^2e^{-\int_0^{-t}\mc{L}^s\hat{r}_-\, ds}\mc{L}^{-t}f\cjd\\
&- \cjg {\rm Op}(r_1)\omega', U_+^*e^{-\int_0^{-t}\mc{L}^s\hat{r}_-\, ds}\mc{L}^{-t}f\cjd\\
&- \cjg {\rm Op}(r_2)\omega', e^{-\int_0^{-t}\mc{L}^s\hat{r}_-\, ds}\mc{L}^{-t}f\cjd.
\end{split}\end{equation}
Here, the first term involving $(U_+^*)^2$ and the second term involving $(U_+)^*$ makes sense for the following reason: since 
\[ (U_+^*)^2=U_+^2+2{\rm div}(U_+)U_++({\rm div}(U_+))^2+U_+({\rm div}(U_+)),\]
the sum of the 3 first terms gives a second order differential operator with $C^{1-}$ coefficients, the last term is the multiplication operator by the function 
$U_+({\rm div}(U_+))\in \cap_{\eps>0} H^{-\eps}(\rr^3)$. Moreover, the function 
\[F_t:=e^{-\int_0^{-t} 
\mc{L}^s\hat{r}_- ds}\mc{L}^{-t}f\in C^{2-}(\rr^3),\]
thus 
\[ \begin{gathered}
U_+({\rm div}(U_+))F_t\in \cap_{\eps>0} H^{-\eps}(\rr^3),\quad 
U_+^2(F_t)\in \cap_{\eps>0} H^{-\eps}(\rr^3),\\
{\rm div}(U_+)U_+(F_t)\in C^{1-}(\rr^3),\quad 
(U_+)^*(F_t)\in C^{1-}(\rr^3).
\end{gathered}
\]
Using \eqref{Opprop} and $\omega'\in L^2$, all the pairings in \eqref{pairing} make sense.
Let us now estimate the terms in \eqref{pairing} with respect to $t$. First, we have 
\begin{equation}\label{bound1}
\Big|\cjg {\rm Op}(r_2)\omega', F_t\cjd\Big|\leq C||\omega'||_{L^2}||f||_{C^0}.
\end{equation}
Since $U_+$ belongs to $E_s$,  we have $|d\varphi_{-t}.U_+|=\mc{O}(e^{-\mu_{-}|t|})$ for $t\leq 0$, thus
\[ \begin{split}
\Big|U_+(F_t)\Big|\leq &
 \Big| \int_{0}^{-t} {d\hat{r}_-}_{\varphi_s}.d\varphi_s U_+ ds\Big| ||f||_{C^0}+ \Big| df_{\varphi_{-t}}.d\varphi_{-t} U_+\Big|\\
 \leq & C||f||_{C^0} +Ce^{-\mu_{-}|t|}||f||_{C^1}
\end{split}\]
(here and later $C$ depends on $\mu_-$). This implies the bound 
\begin{equation}\label{bound2}
\Big|\cjg {\rm Op}(r_1)\omega', U_+^*F_t\cjd\Big|\leq C||\omega'||_{L^2}( ||f||_{C^0} +Ce^{-\mu_{-}|t|}||f||_{C^1}).
\end{equation}
It remains to analyse the first term in \eqref{pairing}.  Similarly as above, one has 
\begin{equation}\label{est1} 
\Big|2{\rm div}(U_+)U_+(F_t)+({\rm div}(U_+))^2F_t\Big|\leq C||f||_{C^0} +Ce^{-\mu_{-}|t|}||f||_{C^1}
\end{equation}
and ${\rm Op}(b)\omega'$ paired with that term is bounded like \eqref{bound2}. 
Next, we can use the bilinear estimate, for each $\delta>0$,
 $||fu||_{H^\delta}\leq C_\delta ||f||_{C^{2\delta}}||u||_{H^{\delta}}$ 
for some $C_\delta>0$, to deduce that  
\begin{equation}\label{est2} 
|| U_+({\rm div}(U_+)).F_t ||_{H^{-\delta}(\mc{M})}\leq C_{\delta} ||U_+({\rm div}(U_+))||_{H^{-\delta}(\mc{M})} ||F_t||_{C^{2\delta}(\mc{M})}\end{equation}
and using interpolation estimates between $C^0$ and $C^1$ norm of $F_t$, we have 
\begin{equation}\label{est3} 
||F_t||_{C^{2\delta}(\mc{M})}\leq C||F_t||^{1-2\delta}_{C^0}||F_t||^{2\delta}_{C^{1}}\leq 
C ||f||_{C^{1}}e^{2\delta \mu_{+}|t|}.
\end{equation}
To deal with $U_+^2(F_t)$, we first rewrite for $t\leq 0$
\[  F_t= \mc{L}^{|t|}(f)e^{-\int_{0}^{|t|} \mc{L}^s(\hat{r}_-)ds}\]
and use the identity of operators (following from Lemma \ref{existunstable}) for $t\in\rr$
\[ U_+\mc{L}^{t}=
 e^{-\int_{0}^{t} \mc{L}^s(r_+)ds}\mc{L}^{t}U_+.\]
We get for $t\leq 0$
\[ U_+(F_t)=e^{-\int_{0}^{|t|} \mc{L}^s(\hat{r}_-)ds}\Big(e^{-\int_0^{|t|}\mc{L}^s(r_+)ds}\mc{L}^{|t|}U_+f-\int_0^{|t|}e^{-\int_0^{s}\mc{L}^u(r_+)du}\mc{L}^s(U_+\hat{r}_-)ds\, \mc{L}^{|t|}f
\Big)\]
and reapplying $U_+$, this gives 
\[\begin{split}
U_+^2(F_t)=e^{-\int_{0}^{|t|} \mc{L}^s(\hat{r}_-)ds}\Big[ & \Big( \int_0^{|t|}e^{-\int_0^s\mc{L}^u(r_+)du}\mc{L}^s(U_+\hat{r}_-)ds
\Big)^2\mc{L}^{|t|}f+ e^{-2\int_0^{|t|}\mc{L}^s(r_+)ds}\mc{L}^{|t|}(U_+^2f)\\
& -2 \Big(\int_0^{|t|}e^{-\int_0^s\mc{L}^u(r_+)du}\mc{L}^s(U_+\hat{r}_-)ds\Big)e^{-\int_0^{|t|}\mc{L}^s(r_+)ds}\mc{L}^{|t|}(U_+f)\\
& - \mc{L}^{|t|}(U_+f)e^{-\int_0^{|t|}\mc{L}^s(r_+)ds}\int_0^{|t|}e^{-\int_0^s\mc{L}^u(r_+)du}\mc{L}^s(U_+r_+)ds\\
& +\Big(\int_0^{|t|}e^{-\int_0^s\mc{L}^u(r_+)du}\mc{L}^s(U_+\hat{r}_-)\int_0^se^{-\int_0^u\mc{L}^v(r_+)dv}\mc{L}^u(U_+r_+)du\,
ds\\
& \quad \quad -\int_0^{|t|}e^{-2\int_0^s\mc{L}^u(r_+)du}\mc{L}^s(U_+^2\hat{r}_-)ds
\Big)\mc{L}^{|t|}f\Big].
\end{split}\]
Using that $r_\pm\in C^{2-}(\mc{M})$ and that for $s\geq 0$, $r=r_+$ or $r=\hat{r}_-$ 
we have 
\[ \Big|e^{-\int_0^s \mc{L}^u(r)du}\Big|\leq C\]
for some $C$ independent of $s$, we see that the four first lines of the identity giving $U_+^2(F_t)$ are bounded in $C^0$ norm by  
$Ct^{2}||f||_{C^2}$ for $t\leq 0$. The only term that remains to be analysed is the $H^{-\delta}(\mc{M})$ norm of the distribution $W_t\mc{L}^{|t|}f$ where
\[ W_t:=-e^{-\int_{0}^{|t|} \mc{L}^s(\hat{r}_-)ds}\int_0^{|t|}e^{-2\int_0^s\mc{L}^u(r_+)du}\mc{L}^s(U_+^2\hat{r}_-)ds.\]
By Lemma \ref{normL^s}, we have for all $t\leq 0$
\[\begin{split} 
||W_t||_{H^{-\delta}}\leq &  C|t|\,||e^{-\int_{0}^{|t|} \mc{L}^s(\hat{r}_-)ds}||_{C^{2\delta}}\sup_{s\in [0,|t|]}(||e^{-2\int_0^s\mc{L}^u(r_+)du}||_{C^{2\delta}}e^{\delta|s|\mu_{+}})
||U_+^2\hat{r}_-||_{H^{-\delta}}\\
\leq &  Ce^{6\delta |t|\mu_{+}} ||U_+^2\hat{r}_-||_{H^{-\delta}}
\end{split} \]
where, as above, we have used interpolation between $C^0$ and $C^1$ to bound the 
$C^{2\delta}$ norms of the terms $e^{-\int_{0}^{|t|} \mc{L}^s(r_\pm)ds}$. Now we get 
\[ ||W_t\mc{L}^{|t|}f||_{H^{-\delta}}\leq C_\delta e^{6\delta |t|\mu_{+}} ||U_+^2\hat{r}_-||_{H^{-\delta}} ||\mc{L}^{|t|}f||_{C^{2\delta}}\leq C_\delta e^{8\delta |t|\mu_{+}} ||U_+^2\hat{r}_-||_{H^{-\delta}}||f||_{C^1}.\]
We conclude 
that for all $\delta>0$  there exists $C_\delta>0$  and all $t\leq 0$
\begin{equation}\label{sobolev2eps} 
||U_+^2(F_t)||_{H^{-\delta}(\mc{M})}\leq C_\delta e^{8\delta |t|\mu_{+}}||f||_{C^2}.
\end{equation}
Combining \eqref{est1}, \eqref{est2}, \eqref{est3} with \eqref{sobolev2eps}, we obtain that for each $\delta\in (0,\eps)$ there is $C_{\delta,\eps}$ depending on $\delta$ and $\eps$ such that and $t\leq 0$
\[ 
\cjg {\rm Op}(b)\omega', (U_+^*)^2F_t\cjd\leq C_{\delta,\eps} e^{8\delta |t|\mu_{+}}||\omega'||_{L^2}||f||_{C^2}.\]
Combining this with \eqref{bound1} and \eqref{bound2}, we get our final estimate 
\[ |\cjg \omega,e^{-\int_0^{-t}\mc{L}^s\hat{r}_-\, ds}\mc{L}^{|t|}f\cjd |\leq 
C_{\delta,\eps} e^{8\delta  |t|\mu_{+}}||\omega'||_{L^2}||f||_{C^2}\leq 
C_{\delta,\eps} e^{8\delta |t|\mu_{+}}||\omega||_{H^{-2+\eps}}||f||_{C^2}\]
which shows \eqref{toprove}. 
\end{proof}

For $N>0$, we use the notation $\mc{H}^{N}$ for the anisotropic Sobolev space of Proposition \ref{fauresjos} with $N_0\gg 1$ very large, and we let $(C^2(\mc{M}))'$ be the dual Banach space of $C^2(\mc{M})$.
\begin{theo}\label{mainresult2}
Let $\eps>0$, the operator $R_{-X-r_-}(\la)U_-: \mc{H}^{1-\eps}\to (C^2(\mc{M}))'$ is an analytic family of bounded operators in the region ${\rm Re}(\la)>-\mu_{\min}(1-\eps)$. 
Let ${\rm Re}(\la)>-\mu_{\min}(1-\eps)$ and let $\omega\in H^{-2+\eps}(\mc{M})$ be such that there exist $A_1,A_2\in \Psi^0(\mc{M})$ with $A_1+A_2={\rm Id}$ such that ${\rm WF}(A_2)$ is contained in a small conic neighborhood of $E_u^*$, 
$A_1\omega \in L^2(\mc{M})$ and $A_2\omega\in H^{-2+\eps}(\mc{M})$,
then 
\begin{equation}\label{nokernel}
(-X-r_--\la)\omega=0\Rightarrow \omega=0.
\end{equation} 
As a consequence, if $u$ is a generalized resonant state of $-X$ with resonance $\la_0$ in the region ${\rm Re}(\la)>-\mu_{\min}$, then $U_-u=0$.
\end{theo}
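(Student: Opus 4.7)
The strategy is to exploit the eigenvalue equation $(P'-\la)\omega=0$ to derive an explicit flow identity for $\mc{L}^t\omega$, then combine it with Proposition \ref{estimprop} to force $\langle\omega,f\rangle=0$ for every $f\in C^2(\mc{M})$ by sending $t\to-\infty$. From $X\omega=-(r_-+\la)\omega$, solving the first-order ODE $\pl_t(\mc{L}^t\omega)=-(\mc{L}^t r_-+\la)\mc{L}^t\omega$ in the distributional sense yields
\[ \mc{L}^t\omega \;=\; e^{-\la t-\int_0^t \mc{L}^s r_- ds}\,\omega. \]
Pairing against $e^{\int_0^t\mc{L}^s r_- ds}f$ for $f\in C^2(\mc{M})$, using $(\mc{L}^t)^*=\mc{L}^{-t}$ (flow invariance of $dm$) and the identity $\mc{L}^{-t}(e^{\int_0^t\mc{L}^s r_- ds}f)=e^{-\int_0^{-t}\mc{L}^s r_- ds}\mc{L}^{-t}f$ (which comes from $\mc{L}^{-t}\mc{L}^s=\mc{L}^{s-t}$), I obtain for every $t\leq 0$ the key identity
\begin{equation}\label{pl:ident}
\langle\omega,f\rangle \;=\; e^{\la t}\langle\omega,F_t\rangle,\qquad F_t:=e^{-\int_0^{-t}\mc{L}^s r_- ds}\,\mc{L}^{-t}f.
\end{equation}

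The next step is to bound $|\langle\omega,F_t\rangle|$ using the hypothesized decomposition $\omega=A_1\omega+A_2\omega$. For the $L^2$ piece, $\|F_t\|_{L^2}\leq\|e^{-\int_0^{-t}\mc{L}^s r_- ds}\|_{C^0}\|f\|_{L^2}\leq Ce^{-(\mu_{\min}-\eta)|t|}\|f\|_{C^0}$ for any $\eta>0$ (by definition of $\mu_{\min}$), whence $|\langle A_1\omega,F_t\rangle|\leq Ce^{-(\mu_{\min}-\eta)|t|}\|A_1\omega\|_{L^2}\|f\|_{C^0}$ by Cauchy--Schwarz. For the microsupported piece $A_2\omega\in H^{-2+\eps}$, I rewrite $F_t=e^{\mu_{\min} t}\cdot e^{-\int_0^{-t}\mc{L}^s(r_--\mu_{\min}) ds}\mc{L}^{-t}f$ and apply Proposition \ref{estimprop} directly (after the same duality manipulation that produced \eqref{pl:ident}) to get $|\langle A_2\omega,F_t\rangle|\leq C_{\delta,\eps}e^{\mu_{\min} t+8\delta|t|}\|\omega\|_{H^{-2+\eps}}\|f\|_{C^2}$ for any $\delta>0$. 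Substituting both bounds into \eqref{pl:ident} yields
\[ |\langle\omega,f\rangle|\;\leq\; Ce^{-({\rm Re}(\la)+\mu_{\min}-8\delta)|t|}\bigl(\|A_1\omega\|_{L^2}+\|\omega\|_{H^{-2+\eps}}\bigr)\|f\|_{C^2}. \]
For ${\rm Re}(\la)>-\mu_{\min}(1-\eps)$, choosing $\delta$ small makes the exponent strictly positive, and sending $t\to-\infty$ gives $\langle\omega,f\rangle=0$ for every $f\in C^2(\mc{M})$, hence $\omega=0$.

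The analyticity $R_{-X-r_-}(\la)U_-:\mc{H}^{1-\eps}\to(C^2(\mc{M}))'$ on $\{{\rm Re}(\la)>-\mu_{\min}(1-\eps)\}$ follows from the same integrand bound applied to
\[ \langle R_{P'}(\la)\omega,f\rangle \;:=\; -\int_{-\infty}^0 e^{\la t}\langle\omega,F_t\rangle\,dt, \]
motivated by Lemma \ref{resolvr} with $W=-r_-$: absolute convergence of this integral is exactly what the two-piece bound delivers, once one uses that $\omega=U_-u$ admits the decomposition of Proposition \ref{regulariteomega}. For the consequence on generalized $-X$-resonant states, the commutation $[X,U_-]=-r_-U_-$ yields the operator intertwining $(P'-\la_0)U_-=U_-(-X-\la_0)$; hence $(-X-\la_0)^p u=0$ implies $(P'-\la_0)^p(U_-u)=0$, and an induction on $p$ applied to each $U_-((-X-\la_0)^{p-1-k}u)$ (which satisfies the decomposition hypothesis by Proposition \ref{regulariteomega}, since $(-X-\la_0)^{p-1-k}u\in\mc{H}^{1-\eps}$) produces $U_-u=0$.

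The main technical obstacle is the correct bookkeeping of the weights: matching the $\mu_{\min}$-shifted weight of Proposition \ref{estimprop} to the bare weight $e^{-\int_0^{-t}\mc{L}^s r_- ds}$ in $F_t$ requires factoring out $e^{\mu_{\min} t}$, and this factor combines with $e^{\la t}$ in \eqref{pl:ident} to produce the precise spectral region $\{{\rm Re}(\la)>-\mu_{\min}(1-\eps)\}$ of the statement. A secondary subtlety is justifying the ODE identity for $\mc{L}^t\omega$ distributionally, which requires only the regularity $\omega\in H^{-2+\eps}$ furnished by the hypothesis.
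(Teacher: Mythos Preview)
Your proposal is correct and follows essentially the same approach as the paper: derive the flow identity $\mc{L}^t\omega=e^{-\la t-\int_0^t\mc{L}^s r_-\,ds}\omega$, pair with a test function, split $\omega=A_1\omega+A_2\omega$, apply Proposition~\ref{estimprop} to the $A_2$-piece and a direct $L^2$ estimate to the $A_1$-piece, and let $t\to-\infty$; the induction for generalized resonant states via $(P'-\la_0)U_-=U_-(-X-\la_0)$ is also the paper's argument. If anything, your handling of the $A_1\omega\in L^2$ contribution in the proof of \eqref{nokernel} is spelled out more explicitly than in the paper, which only makes this split explicit in the analyticity argument.
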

\begin{proof} Let ${\rm Re}(\la)>-\mu_{\min}(1-\eps)$. 
To prove that $R_{-X-r_-}(\la)U_-u$ is analytic for $u\in  \mc{H}^{1-\eps}$, we use Proposition \ref{estimprop} with $A_2U_-u$: take $\delta>0$ small enough so that ${\rm Re}(\la)+\mu_{\min}>8\delta$, then for each $f\in C^2(\mc{M})$ 
\[ \int_{-\infty}^0\Big |\cjg e^{\la t+\int_{0}^t \mc{L}^sr_-ds}\mc{L}^t(A_2U_-u), f\cjd \Big| dt 
\leq C_{\delta,\eps}||U_-u||_{H^{-2+\eps}}||f||_{C^2}\]
and the trivial inequality 
\[\int_{-\infty}^0\Big |\cjg e^{\la t+\int_{0}^t \mc{L}^sr_-ds}\mc{L}^t(A_1U_-u), f\cjd \Big| dt 
\leq C_{\delta,\eps}||A_1U_-u||_{L^2}||f||_{L^2}\]
thus $R_{-X-r_-}(\la)U_-:\mc{H}^{1-\eps}\to (C^2(\mc{M}))'$ is analytic.

Let us show \eqref{nokernel}.   We set $\omega(t):=e^{tX}\omega=\mc{L}^t\omega$, then in the weak sense
\[\pl_t\omega(t)=-e^{tX}(r_-+\la)\omega=-\omega(t)(\mc{L}^tr_-+\la)\]
and thus 
\begin{equation}\label{omegat}
\omega(t)=\omega e^{-\la t-\int_0^t \mc{L}^sr_- ds}.
\end{equation}
We proceed by contradiction: assume that there is $f\in C^\infty(\mc{M})$ such that $\cjg \omega,f\cjd\not=0$. Let $\mu_-=\mu_{\min}(1-\eps/2)$. Using \eqref{omegat},  we write for $t\leq 0$
\begin{equation}\label{relationomega}
e^{-\la  t-\mu_{-}t}\cjg \omega,f\cjd=\cjg \mc{L}^t\omega, 
e^{\int_0^t (\mc{L}^sr_--\mu_{-})ds}f\cjd=\cjg \omega, 
e^{-\int_0^{-t}(\mc{L}^{s}r_--\mu_{-})ds}\mc{L}^{-t}f\cjd.
\end{equation}
By the estimate \eqref{toprove},  we can take $\delta>0$ small enough so that ${\rm Re}(\la)+\mu_{-}>8\delta$ and we let $t\to -\infty$ in \eqref{relationomega}, and we obtain a contradiction to $\cjg \omega,f\cjd\not=0$, which shows that $\omega=0$.

If $u$ is a resonant state with resonance $\la_0$ and ${\rm Re}(\la_0)>-\mu_{\min}(1-\eps)$, then $u\in \mc{H}^{1-\eps}(\mc{M})$ and by Lemma \ref{existunstable}, we get for $\omega:=U_-u$
\[ 0=U_-(-X-\la_0)u=(-X-\la_0-r_-)\omega.\]
According to Proposition \ref{regulariteomega}, $\omega$ has the sufficient property to apply \eqref{nokernel}, thus $\omega=0$.
If now $u$ is a generalized resonant states with resonance $\la_0$, it satisfies $(-X-\la_0)^ju=u_0$ for some resonant state $u_0$ and some $j\in \nn$. Using an induction assumption that the generalized resonant state $(-X-\la_0)u=(-X-\la_0)^{j-1}u_0$ is in $\ker U_-$, we get $(-X-\la_0-r_-)U_-u=U_-(-X-\la_0)^{j-1}u_0=0$
and we can aplpy \eqref{nokernel} to $\omega=U_-u$.
\end{proof}

\subsection{Applications: invariant distributions for $U_-$ and obstruction to solutions of the cohomological equation}\label{lastsec}

We recall the result of Faure-Tsujii \cite{FaTs} describing the localisation of Ruelle resonances. 
For a potential $V\in C^{\infty}(\mc{M})$ let us define the quantities for $k=0,1$
\[\begin{gathered}
\gamma_k^+:=\lim_{t\to +\infty}\sup_{z\in \mc{M}} \frac{1}{t}\int_0^t (V-(\demi+k)r_-)\circ \varphi_s(z) ds,\\
\gamma_k^-:=\lim_{t\to +\infty}\inf_{z\in \mc{M}} \frac{1}{t}\int_0^t (V-(\demi+k)r_-)\circ \varphi_s(z) ds.
\end{gathered}\]
In particular, when $V=0$, this gives
\[ \gamma_0^+=-\frac{1}{2} \mu_{\min} , \,\, \gamma_0^-=-\frac{1}{2} \mu_{\max}, \,\,
 \gamma_1^+=-\frac{3}{2}\mu_{\min}, \,\, \gamma_1^-=-\frac{3}{2}\mu_{\max}. \]
\begin{theo}[Faure-Tsujii \cite{FaTs}]\label{Fauretsujii}
Let $\mc{M}$ be a $3$-dimensional oriented manifold and let $X$ be a smooth vector field generating a contact Anosov flow and $V$ be a smooth potential. Then for each $\eps>0$ small, there exists only finitely many resonances of $P=-X+V$ in the region 
\[ \{{\rm Re}(\la)>\gamma_1^++\eps\}\setminus \{{\rm Re}(\la)\in [\gamma_0^--\eps,\gamma_0^++\eps]\}.\]
If $\gamma_1^+<\gamma_0^-$, then there is 
infinitely many resonances in $\{{\rm Re}(\la)\in [\gamma_0^--\eps,\gamma_0^++\eps]\}$, with a Weyl type asymptotics. In the case $V=0$, the condition $\gamma_1^+<\gamma_0^-$ can be rewritten as the pinching condition $3\mu_{\min}>\mu_{\max}$.
\end{theo}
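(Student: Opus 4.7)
The plan is to refine the Faure-Sjöstrand construction recalled in Proposition \ref{fauresjos} by introducing a family of anisotropic Sobolev spaces $\mc{H}^{N,k}$ indexed by a ``band number'' $k\in \{0,1\}$, chosen so that the Fredholm threshold of $P-\la$ on $\mc{H}^{N,k}$ becomes $\gamma_k^+ + O(1/N)$ rather than the uniform threshold $V_{\max}-N\mu_{\min}$ of Proposition \ref{fauresjos}. Combined with the wavefront localization of resonant states at $E_u^*$, this should yield the finiteness of resonances above $\gamma_1^+ + \eps$ and outside $[\gamma_0^--\eps,\gamma_0^++\eps]$. The Weyl asymptotics under pinching should then follow from a microlocal reduction near $E_u^*$ to a model semiclassical operator on a transverse horocyclic direction.

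First I would construct order functions $m_k$ on $T^*\mc{M}$ which agree with the Faure-Sjöstrand order function away from $E_u^*\cup E_s^*$ but whose value on the unstable cone $E_u^*$ is shifted by $k$. Conjugating $P$ by an elliptic pseudodifferential operator of variable order $m_k$ and repeating the escape-function/radial-point argument yields that $(P-\la)\colon \mc{H}^{N,k}\to \mc{H}^{N,k}$ is an analytic Fredholm family of index $0$ in $\{\Re(\la)>\gamma_k^+ +O(1/N)\}$. The threshold $\gamma_k^+$ depends on the Birkhoff average of $V-(\demi+k)r_-$ because raising the order at $E_u^*$ by $k$ shifts the transport equation at the radial source by $k r_-$, so the commutator estimate at $E_u^*$ (see Dyatlov-Zworski \cite[Prop. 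E.53]{DyZw}) now involves $V-(\demi+k)r_-$. Comparing the two Fredholm frameworks on $\mc{H}^{N,0}$ and $\mc{H}^{N,1}$ in the overlap region $\{\Re(\la)>\gamma_1^+ +\eps\}$ and using that any resonant state has wavefront set in $E_u^*$, one identifies the resonances lying in the ``first band'' $[\gamma_0^-,\gamma_0^+]$ with those in $\mc{H}^{N,0}$ which fail to extend to $\mc{H}^{N,1}$. Above $\gamma_1^+$ and outside the first band, all such resonances come from a finite-dimensional quotient, hence are finitely many.

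For the Weyl asymptotics, assume $\gamma_1^+<\gamma_0^-$ so that the first band is isolated. I would use the symplectic form $d\alpha$ on $\ker\alpha$ and flow invariance to set up a microlocal normal form for $P-\la$ near $E_u^*$. Along a local horocyclic leaf the flow acts as a linear contraction at rate $r_-$, and the contact structure provides a canonical pairing between the $E_u^*$ fiber direction and the $E_s$ base direction. After a suitable semiclassical rescaling with parameter $h\sim |\Im(\la)|^{-1}$, $P-\la$ should become microlocally equivalent, modulo contributions from lower bands, to a model semiclassical operator of the form $h\mc{U}_- + V_{\rm mod}$ on a one-dimensional transverse manifold, where $\mc{U}_-=U_-+\demi{\rm div}(U_-)$ is the skew-adjoint horocyclic derivative of Corollary \ref{maincor}(3). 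The semiclassical Weyl law for this model operator then gives a linear growth of the number of resonances in the first band. The pinching condition $\gamma_1^+<\gamma_0^-$ ensures that the perturbation from the lower bands is genuinely small in the semiclassical limit.

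The main obstacle is the normal-form step in the Weyl asymptotic part. Because the coefficients $U_-$, $r_-$ are only of regularity $C^{2-}(\mc{M})$, the microlocal conjugation reducing $P$ to its model cannot be carried out in the usual smooth pseudodifferential calculus; it requires the paradifferential framework used in Proposition \ref{estimprop}, together with quantitative error control strong enough to preserve the leading-order spectral count. The finiteness statement, by contrast, is a relatively direct adaptation of the Faure-Sjöstrand scheme once the correct band-sensitive order functions $m_k$ are chosen and the associated radial source estimates are written out with the shifted transport weight $V-(\demi+k)r_-$.
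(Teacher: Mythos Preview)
The paper does not prove this theorem. It is stated and attributed to Faure--Tsujii \cite{FaTs} (see the sentence immediately preceding the statement: ``We recall the result of Faure--Tsujii \cite{FaTs} describing the localisation of Ruelle resonances''), and is used as a black box to deduce Corollary \ref{cormain}. So there is no ``paper's own proof'' to compare your proposal against.

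As a separate comment on your sketch viewed as an independent attempt at the Faure--Tsujii result: the approach you outline is not the one in \cite{FaTs,FaTs2}. Their argument does not proceed via band-indexed order functions $m_k$ and radial-point thresholds on anisotropic Sobolev spaces; rather, it lifts the flow to a Grassmannian bundle (so that the unstable direction becomes a smooth object), uses an FBI/Bargmann transform adapted to the contact structure, and decomposes the transfer operator into a sum over Taylor components transverse to the trapped set $E_u^*$, each contributing one band. Your Step 1 (finiteness above $\gamma_1^+$ outside the first band) is plausible in spirit and is close to the Dyatlov \cite{Dy} framework for $r$-normally hyperbolic trapping, but note that making the ``shifted'' radial estimate precise already requires handling the $C^{2-}$ regularity of $r_-$, not just of $U_-$. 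Your Step 2 (Weyl law) is, as you yourself flag, the hard part: the reduction to a one-dimensional model operator $h\mc{U}_-+V_{\rm mod}$ is not something that follows from a normal form in a paradifferential calculus, and in \cite{FaTs2} this step is handled by the Grassmannian lift precisely to avoid working with non-smooth coefficients. So the proposal identifies the right phenomena but does not supply the key mechanism that actually makes the proof go through.
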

The proof of Corollary \ref{cormain} about the existence of infinitely many distributions in the Sobolev space $H^{-\demi\frac{\mu_{\max}+\eps}{\mu_{\min}}}(\mc{M})$ that are horocyclic invariant (for all $\eps>0$) is a direct consequence of Theorems \ref{main} and \ref{Fauretsujii}, applied with $V=0$.\\

In \cite{FlFo}, the analysis of the distributions in $\ker U_-$ allows in constant curvature to solve the cohomological equation $U_-f=g$ for $g$ with a given regularity.
In our case, the operator $U_-^*\not=-U_-$ in general.
To say something about the cohomological equation for $U_-$ in certain spaces, one has to know something about the kernel of $U_-^*$. In particular, using Corollary \ref{maincor},  the generalised resonant states of $-X+r_-$ in $\{{\rm Re}(\la)>h_{\rm top}-\mu_{\min}\}$ are elements in $\ker U_-^*$ inside $(C^{q}(\mc{M}))'$ for each $q<1$, and thus provides obstructions to solve $U_-f=g$ with $f\in C^2(\mc{M}),g\in C^1(\mc{M})$: to have a solution of $U_-f=g$ with $f\in C^2(\mc{M})$, $g$ must satisfy $\cjg u, g\cjd=0$ for all generalised resonant states $u$ of $-X+r_-$ with resonances $\la_0$ 
such that ${\rm Re}(\la_0)>h_{\rm top}-\mu_{\min}$.

\end{document}